\numberwithin{equation}{section}
\newcommand\st{\operatorname{st}}
\DeclareMathOperator{\Cone}{Cone}
\DeclareMathOperator{\GL}{GL}
\newcommand{\C}{{\mathbb{C}}}
\newcommand{\R}{{\mathbb{R}}}
\newcommand{\Z}{{\mathbb{Z}}}
\newcommand{\B}{\mathscr{B}}
\newcommand{\op}{\textbf{Op.\,1}}
\newcommand{\oop}{\textbf{Op.\,2}}
\newcommand{\AOP}{\mathsf{AOP}}
\theoremstyle{plain}
\newtheorem{theorem}{Theorem}[section]
\newtheorem{lemma}[theorem]{Lemma}
\newtheorem{proposition}[theorem]{Proposition}
\newtheorem{corollary}[theorem]{Corollary}
\theoremstyle{definition}
\newtheorem{example}[theorem]{Example}
\newtheorem{definition}[theorem]{Definition}
\theoremstyle{remark}
\newtheorem{remark}[theorem]{Remark}
\crefname{lemma}{Lemma}{Lemmas}
\crefname{theorem}{Theorem}{Theorems}
\crefname{proposition}{Proposition}{Propositions}
\crefname{question}{Question}{Questions}
\crefname{definition}{Definition}{Definitions}
\crefname{conjecture}{Conjecture}{Conjectures}
\crefname{figure}{Figure}{Figures}
\crefname{corollary}{Corollary}{Corollaries} 
\crefname{table}{Table}{Tables}
\begin{document}

\author{Junho Jeong}
\address[Junho Jeong]{Department of Mathematics,
	Chungbuk National University,
	Cheongju 28644, Republic of Korea}
\email{junhojeong@chungbuk.ac.kr}

\author{Jang Soo Kim}
\address[Jang Soo Kim]{Department of Mathematics,
	Sungkyunkwan University, 
	Suwon 16419, Republic of Korea}
\email{jangsookim@skku.edu}

\author{Eunjeong Lee}
\address[Eunjeong Lee]{Department of Mathematics,
	Chungbuk National University,
	Cheongju 28644, Republic of Korea}
\email{eunjeong.lee@chungbuk.ac.kr}


\title[Bott manifolds of BS type and assemblies of ordered partitions]{Bott manifolds of Bott--Samelson type and \\ 
	assemblies of ordered partitions}

\date{\today}

\subjclass[2020]{Primary: 14M25, 14M15; Secondary: 05A05, 05E14, 05A19}

\keywords{Schubert varieties, toric varieties, Bott--Samelson--Demazure--Hansen varieties, Bott manifolds}

\begin{abstract} 
  A Bott manifold is a smooth projective toric variety having an iterated $\C P^1$-bundle structure. A certain family of Bott manifolds is used to understand the structure of Bott--Samelson varieties (or Bott--Samelson--Demazure--Hansen varieties), which provide desingularizations of Schubert varieties. Indeed, each Bott--Samelson variety is diffeomorphic to a Bott manifold. However, not all Bott manifolds originate from Bott--Samelson varieties. Those that do are specifically referred to as Bott manifolds of \emph{Bott--Samelson type}. In this paper, we provide a characterization of Bott manifolds of Bott--Samelson type by exploring their relationship with combinatorial objects called assemblies of ordered partitions. Using this relationship, we enumerate Bott manifolds of Bott--Samelson type and describe isomorphic Bott manifolds of Bott--Samelson type in terms of assemblies of ordered partitions.
\end{abstract}

\maketitle


\section{Introduction}

A \emph{Bott manifold} is a smooth projective toric variety that
arises as the total space of an iterated sequence of
\(\C P^1\)-bundles. This iterated sequence of Bott manifolds is called
a \emph{Bott tower}, a notion introduced by Grossberg and
Karshon~\cite{GK94Bott}. The original motivation of Bott towers stems
from their connection to another interesting family of smooth
projective varieties, called \emph{Bott--Samelson
  varieties}\footnote{The construction of the Bott--Samelson varieties
  is due to Bott and Samelson~\cite{BottSamelson58} in the framework
  of compact Lie groups, and was later developed in the
  algebro-geometric setting by Hansen~\cite{Hansen73} and
  Demazure~\cite{Demazure74}. For this reason, Bott--Samelson
  varieties are also referred to as Bott--Samelson--Demazure--Hansen
  varieties.}. These varieties provide desingularizations of Schubert
varieties. Although a Bott--Samelson variety is not toric in general,
Grossberg and Karshon~\cite{GK94Bott} showed that it is always
diffeomorphic to a Bott manifold. Moreover, Pasquier~\cite{Pasquier10}
proved that there exists a toric degeneration in which the generic
fiber is a Bott--Samelson variety and the special fiber is the
corresponding Bott manifold considered in~\cite{Pasquier10}. Because
of this relation, Bott manifolds provide an interesting connection
between the geometry, topology, combinatorics, and representation
theory (see, for instance,~\cite{GK94Bott, Pasquier10, CHJ22}).

We note that not all Bott manifolds arise from Bott--Samelson
varieties. Indeed, in each dimension \(m\), there exist infinitely
many Bott manifolds, whereas there are only finitely many
Bott--Samelson varieties. A Bott manifold coming from a Bott--Samelson
variety is called a Bott manifold of \emph{Bott--Samelson type} (or of
\emph{BS type}). This family includes, in particular, toric Schubert
varieties in the full flag variety
(cf.~\cite[Exercise~18.2.7]{AndersonFulton24}
and~\cite[Theorem~4.23]{LMP_directed_Dynkin}).

In this paper, we investigate Bott manifolds of BS type by associating
them with certain combinatorial objects called \emph{assemblies of
  ordered partitions}. In order to state our main theorem, we first
introduce some notation and terminology. See \Cref{section_AOP} for
more details.

Let \( [n]:=\{ 1,\dots,n\} \). Given a sequence
\(\bm{i} = (i_1,\dots,i_m) \in [n]^m\) of integers between \(1\) and
\(n\), we associate to it a Bott manifold \(\B_{\bm{i}}\) of BS type
by considering an iterated sequence of \(\C P^1\)-fibrations, where
the fibration structures are determined by the sequence~\(\bm{i}\)
(see Definition~\ref{def_Bott_manifold_of_BS_type} for a precise
description). Thus, the family of Bott manifolds of BS type can be
parameterized by the sequences in \([n]^m\).

An \emph{assembly of ordered partition} of \( [m] \) with bound
\( n \) is a sequence \( \sigma=(\sigma_1,\dots,\sigma_\ell) \) of
ordered partitions \( \sigma_a = (\sigma_a^1,\dots,\sigma_a^{r_a}) \)
such that \( \min(\sigma_1) < \cdots < \min(\sigma_\ell) \),
\( r_1+\dots+r_{\ell} + ({\ell}-1) \leq n \), and
\( \{\sigma_a^b:a\in [\ell], b\in [r_a]\} \) is a partition of
\( [m] \) (see Definition~\ref{def_assembly_of_ordered_partitions} for
more details). Let \(\AOP(n,m)\) denote the set of such assemblies of
ordered partitions. For an ordered partition
\(\eta = (\eta^1,\dots,\eta^r)\), we write
\(\overline{\eta} = (\eta^r,\dots,\eta^1)\). We define the equivalence
relation \(\sim\) on \( \AOP(n,m) \) as follows: for
\(\sigma = (\sigma_1,\dots,\sigma_\ell), \tau =
(\tau_1,\dots,\tau_{\ell'}) \in \AOP(n,m)\), we have
\(\sigma \sim \tau\) if \( \ell=\ell' \) and for each
\( a\in [\ell] \) either \(\tau_a = \sigma_a\) or
\(\tau_a = \overline{\sigma}_a\). We now state our first main result.

\begin{theorem}\label{thm_Bott_matrices}
  Let \(n\) and \(m\) be positive integers. 
  There is a bijection between the set of Bott manifolds of BS type
  and the set of equivalence classes of \(\AOP(n,m)\) under the relation \(\sim\):
\[
\{\B_{\bm i}: \bm i \in [n]^m\} 
\stackrel{1:1}{\longleftrightarrow} 
\AOP(n,m)/\sim. 
\] 
\end{theorem}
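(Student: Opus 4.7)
The plan is to construct explicit mutually inverse maps between sequences $\bm{i}\in[n]^m$ (modulo the relations that identify isomorphic Bott manifolds) and assemblies in $\AOP(n,m)/\sim$. The construction proceeds in three stages.

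\textbf{From sequences to assemblies.} First I would describe an algorithmic reading procedure $\Phi\colon [n]^m \to \AOP(n,m)$ that scans $\bm{i}=(i_1,\dots,i_m)$ from left to right and groups the positions $[m]$ into blocks according to the combinatorial pattern of repeated values. For each maximal ``chain'' of mutually interacting labels, the corresponding positions are collected into an ordered partition $\sigma_a$, and the resulting blocks are listed in the order of their smallest positions, so that $\min(\sigma_1)<\cdots<\min(\sigma_\ell)$ is automatic. The inequality $r_1+\cdots+r_\ell+(\ell-1)\le n$ should fall out of the observation that each of the $\ell$ blocks already consumes at least one value in $[n]$, plus one separator between consecutive blocks.

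\textbf{The inverse.} Conversely, given $\sigma=(\sigma_1,\dots,\sigma_\ell)\in\AOP(n,m)$, I would assign distinct labels in $[n]$ to the parts of the $\sigma_a$ in a canonical order-preserving way, and then read off the sequence $\bm{i}\in[n]^m$ whose value at position $j$ is the label of the part containing $j$. The minimality condition and the bound in the definition of $\AOP(n,m)$ are exactly what is needed for this assignment to exist and to be well defined, giving a candidate inverse $\Psi\colon \AOP(n,m)\to \{\B_{\bm{i}}:\bm{i}\in[n]^m\}$.

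\textbf{Matching the equivalence relations.} The isomorphism class of $\B_{\bm{i}}$ among Bott manifolds of BS type is generated by the two elementary operations \op{} and \oop{} introduced earlier in the paper. I would verify that \op{} leaves $\Phi(\bm{i})$ unchanged, so $\Phi$ descends to isomorphism classes, whereas \oop{} corresponds precisely to the substitution $\sigma_a\leftrightarrow \overline{\sigma}_a$ on a single component, which is exactly the generating move of $\sim$. Combined with the constructions of $\Phi$ and $\Psi$, this produces mutually inverse maps on the two quotient sets and hence the desired bijection.

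\textbf{Main obstacle.} The hard part will be the \emph{completeness} direction: showing that every isomorphism $\B_{\bm{i}}\cong \B_{\bm{j}}$ of Bott manifolds of BS type is generated by a sequence of the two elementary operations \op{} and \oop, with no hidden identifications beyond these. Without this, $\Phi$ could a priori collapse more isomorphism classes than $\sim$ does on the assembly side. I expect to address it by translating an abstract isomorphism of Bott manifolds into a combinatorial equivalence of the corresponding fans (or of the presentations of the integral cohomology rings) and then decomposing such an equivalence into a product of the two basic moves by induction on the dimension~$m$; once this is established, the bijection of the theorem follows formally from the constructions above.
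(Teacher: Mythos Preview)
You have misread the statement. The left-hand side \(\{\B_{\bm i}: \bm i\in [n]^m\}\) is a \emph{set} of Bott manifolds, not a set of isomorphism classes: two sequences \(\bm i,\bm j\) give the same element here exactly when the Bott matrices \(B_{\bm i}\) and \(B_{\bm j}\) are \emph{equal} as matrices, not merely when the manifolds are isomorphic. The paper's proof (Proposition~\ref{prop_equivalence_Bnm_POPnm}) is accordingly purely combinatorial: it shows that the surjection \(F\colon\AOP(n,m)\to B(n,m)\) has fibers exactly the \(\sim\)-classes, by reading off a maximal path in the matrix \([B_{j,k}]\) to recover each ordered partition \(\sigma_a\) up to reversal. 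No geometry, no \op{} or \oop{}, is involved.

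Your plan instead attacks Theorem~\ref{thm_isom}, which concerns isomorphism classes and the coarser relation \(\approx\). Two specific things go wrong. First, your ``matching the equivalence relations'' step is based on a false identification: \oop{} (conjugation by \(\dot s_i\)) swaps positions \(i\) and \(i+1\) and corresponds to an \emph{admissible transposition} on assemblies (Proposition~\ref{prop_op2_on_BS}), not to the reversal \(\sigma_a\leftrightarrow\overline{\sigma}_a\). The reversal move generates \(\sim\) and leaves the Bott matrix literally unchanged, so it is not induced by any operation on matrices at all. Second, your ``main obstacle''---decomposing an arbitrary isomorphism into \op{} and \oop{}---is irrelevant here (and is in fact quoted from \cite{CLMP_unique} when it is needed, in the proof of Theorem~\ref{thm_isom}). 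For Theorem~\ref{thm_Bott_matrices} you only need the elementary fact that \(F(\sigma)=F(\tau)\) iff \(\sigma\sim\tau\), which follows directly from the definition of \(F\) once you observe that the entries \(B_{j,k}\) record only whether \(j,k\) lie in the same block or in adjacent blocks of some \(\sigma_a\).
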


To obtain this theorem, we analyze the \emph{Bott matrices}
corresponding to Bott manifolds of BS type, which are lower triangular
matrices that encode the iterated \(\C P^1\)-bundle structures. By
classifying all such Bott matrices, we enumerate the Bott manifolds of
BS type.

\begin{corollary}\label{cor_enumeration}
  The generating function for the number \(b(n,m)\) of Bott manifolds
  of BS type determined by the sequences in \([n]^m\) is given by
\[
	\sum_{n,m\ge1}  \frac{b(n,m)}{m!} x^m y^{n}  
	= \frac{1}{y(1-y)} \left( \exp \left( \frac{y}{2}
	\left( \frac{1}{1-y(e^x-1)} +y(e^x-1) -1 \right)\right) -1\right).
\]
\end{corollary}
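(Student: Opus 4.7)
The plan is to use \Cref{thm_Bott_matrices} to reduce the enumeration to a count of equivalence classes in \(\AOP(n,m)\) under \(\sim\), and then apply the exponential formula.

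Reversal acts independently on each piece \(\sigma_a\) of an assembly and fixes \(\sigma_a\) exactly when \(r_a=1\), since for \(r_a\ge 2\) the parts of \(\sigma_a\) are pairwise disjoint and nonempty, so reversal is a free involution. Hence an equivalence class in \(\AOP(n,m)\) is the data of a set partition \(\{S_1,\dots,S_\ell\}\) of \([m]\)---which inherits the canonical order \(\min(S_1)<\cdots<\min(S_\ell)\)---together with, for each block \(S_a\), an ordered partition of \(S_a\) modulo reversal. Let \(a(k,r)\) denote the number of such structures on a set of size \(k\) with \(r\) parts; then \(a(k,1)=1\) and \(a(k,r)=r!\,S(k,r)/2\) for \(r\ge 2\), where \(S(k,r)\) is a Stirling number of the second kind. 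Using \(\sum_{k\ge 1} r!\,S(k,r)\,x^k/k!=(e^x-1)^r\) and setting \(u=y(e^x-1)\), the exponential generating function for a single piece (with \(x^k/k!\) tracking \(|S_a|\) and \(y^r\) tracking \(r_a\)) simplifies to
\[
P(x,y) \;=\; u + \frac{1}{2}\sum_{r\ge 2}u^r \;=\; u+\frac{u^2}{2(1-u)} \;=\; \frac{1}{2}\left(\frac{1}{1-u}+u-1\right).
\]

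Next, assembling pieces via the exponential formula and inserting an extra factor \(y\) per piece to track the number \(\ell\) of blocks yields
\[
G(x,y) \;=\; \exp\bigl(y\,P(x,y)\bigr)-1 \;=\; \sum_{m\ge 1}\frac{x^m}{m!}\sum_{k\ge 2} c(m,k)\,y^k,
\]
where \(c(m,k)\) counts equivalence classes of AOPs of \([m]\) with \(k=r_1+\cdots+r_\ell+\ell\). The constraint in the definition of \(\AOP(n,m)\) reads \(k-1\le n\), and it is imposed on \(G\) by the operator \(F\mapsto F/(y(1-y))\): division by \(y\) shifts the \(y\)-exponent down by one (matching \(k-1\) with \(n\)), and multiplication by \(1/(1-y)\) converts the exact count \([y^n]\) into the cumulative count \(\sum_{j\le n}\). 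A direct coefficient extraction then gives
\[
\sum_{m,n\ge 1}\frac{b(n,m)}{m!}\,x^m y^n \;=\; \frac{G(x,y)}{y(1-y)},
\]
and substituting the closed form of \(P\) produces the claimed identity.

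The main subtlety is coordinating the generating-function bookkeeping with the combinatorial weight: tracking both \(T=\sum r_a\) and \(\ell\) with a single variable \(y\) via the factor \(y\,P\), and then translating the inequality \(T+\ell-1\le n\) into the operator \(1/(y(1-y))\). Once the weights are set up so that they factor cleanly over pieces, the closed-form evaluation of \(P\) and the application of the exponential formula are both routine.
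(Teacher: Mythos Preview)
Your proposal is correct and follows essentially the same approach as the paper: both reduce via \Cref{thm_Bott_matrices} to counting equivalence classes in \(\AOP(n,m)/\sim\), compute the single-piece EGF \(\frac{y}{2}\bigl(\frac{1}{1-y(e^x-1)}+y(e^x-1)-1\bigr)\) by treating the \(r=1\) case separately, apply the exponential formula, and then convert the bound \(r_1+\cdots+r_\ell+\ell-1\le n\) into the factor \(1/(y(1-y))\). The only cosmetic difference is that the paper first isolates the exact-equality refinement \(b'(n,m)\) and then applies the \(1/(1-y)\) summation, whereas you phrase the same step as an operator applied after assembling \(G(x,y)\).
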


The set \(\AOP(n,m)\) of assemblies of ordered partitions can also be
used to distinguish Bott manifolds of BS type up to isomorphisms as
toric varieties. Let
\(\sigma = (\sigma_1,\dots,\sigma_\ell) \in \AOP(n,m)\). For a simple
transposition \(s_i = (i,i+1) \in \mathfrak{S}_m\), we say that
\(s_i\) is an \emph{admissible transposition} for~\(\sigma\) if \(i\)
and \(i+1\) are not neighbors in \(\sigma\). Here, \(c\) and \(d\) are
\emph{neighbors} in \(\sigma\) if they belong to the same block of
some ordered partition \(\sigma_a\), or if they appear consecutively
within the same block of some ordered partition~\(\sigma_a\) (see
Definition~\ref{def_neighbors} for the precise definition). We define
another equivalence relation \( \approx \) on \( \AOP(n,m) \) as
follows: for \(\sigma, \tau \in \AOP(n,m)\), we have
\(\sigma \approx \tau\) if there exist
\( \tilde{\sigma}, \tilde{\tau}\in \AOP(n,m) \) with
\(\tilde{\sigma} \sim \sigma\) and \(\tilde{\tau} \sim \tau\) such
that there is a sequence of admissible transpositions sending
\(\tilde{\sigma}\) to~\(\tilde{\tau}\). With this terminology, we
state our second main theorem.

\begin{theorem}\label{thm_isom}
  Let \(n\) and \(m\) be positive integers. There is a bijection
  between the set of isomorphism classes of Bott manifolds of BS type
  \textup{(}as toric varieties\textup{)} and the set of equivalence
  classes of~\(\AOP(n,m)\) under the relation \(\approx\):
\[
\{\B_{\bm i} : \bm i \in [n]^m \}/(\text{isom})
\stackrel{1:1}{\longleftrightarrow} \AOP(n,m)/\approx. 
\]
\end{theorem}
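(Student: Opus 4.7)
The plan is to leverage \Cref{thm_Bott_matrices} to reduce the claim to a purely combinatorial statement about Bott matrices. Since that theorem already identifies $\{\B_{\bm i} : \bm i \in [n]^m\}$ with $\AOP(n,m)/{\sim}$, it remains to show that, under this bijection, two assemblies yield toric-isomorphic Bott manifolds of BS type if and only if they are $\approx$-equivalent. In particular, $\sim$ is already absorbed on the Bott-manifold side, so the extra content of $\approx$ beyond $\sim$ is exactly what admissible transpositions should contribute on the toric-variety side.

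For the \emph{sufficiency} direction ($\sigma \approx \tau \Rightarrow \B_\sigma \cong \B_\tau$), I would verify that each admissible transposition induces an explicit toric isomorphism. Given $\sigma \in \AOP(n,m)$ and an admissible $s_k \in \mathfrak{S}_m$, the two stages at positions $k$ and $k+1$ in the associated Bott tower should be interchangeable: the non-neighbor condition on $k$ and $k+1$ is precisely designed so that the corresponding rows and columns of the Bott matrix of $\sigma$ commute with the swap while preserving the lower-triangular shape. Concretely, I expect to show that simultaneously permuting rows and columns $k$ and $k+1$ of the Bott matrix associated to $\sigma$ yields the Bott matrix associated to $s_k \cdot \sigma$, and that this permutation is realized by a unimodular transformation of the fan. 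Iterating over sequences of admissible transpositions and combining with the $\sim$-moves then yields the sufficiency direction.

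For the \emph{necessity} direction, suppose $\B_\sigma \cong \B_\tau$ as toric varieties. I would invoke the description of toric isomorphisms between Bott manifolds in terms of admissible operations on their Bott matrices (namely, permutations compatible with the lower-triangular shape together with sign changes on standard generators). Restricted to matrices of BS type, the goal is to show that any such admissible operation can be rewritten as a composition of admissible transpositions and $\sim$-moves. A natural tactic is to produce a canonical representative for each $\approx$-class, for example a lexicographically minimal $\bm i$, and to verify that distinct canonical representatives yield non-isomorphic Bott manifolds by exhibiting a toric invariant (read off from the Bott matrix or the equivariant cohomology) that separates them. The main obstacle lies here: ruling out "hidden" toric isomorphisms not coming from adjacent swaps and reversals. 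I anticipate an induction on $m$ (or on the length $\ell$ of the assembly), combined with a careful analysis of how the sign-change automorphisms interact with the block structure of the assembly encoded by the neighbor relation; this interaction is what makes the necessity direction the most delicate part of the argument.
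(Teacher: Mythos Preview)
Your sufficiency direction is essentially the paper's: an admissible $s_k$ corresponds exactly to conjugating the Bott matrix by the permutation matrix $\dot{s}_k$, and the non-neighbor condition is precisely $b_{k+1,k}=0$, which keeps the conjugate lower triangular and of BS type. This is the paper's \Cref{prop_op2_on_BS}.

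The genuine gap is in your necessity direction. You correctly anticipate invoking a characterization of toric isomorphisms between Bott manifolds via operations on Bott matrices, and you correctly identify the obstruction as the ``sign-change'' operations. But you do not resolve it; you fall back on a speculative program of canonical representatives and separating invariants, which you yourself flag as the hard part. The paper avoids this entirely via one concrete lemma you are missing. The relevant operations from \cite{CLMP_unique} are \op\ ($B\mapsto B_I := L_I^{-1}L_{I^c}$ for $I\subseteq[m]$) and \oop\ (conjugation by $\dot{s}_i$). The key fact (\Cref{lemma_op1_not_in_B}/\Cref{prop_op1_on_BS}) is that \op\ is \emph{trivial} on BS type: for $B\in B(n,m)$, either $B_I=B$ or $B_I\notin B(n,m)$. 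The proof is a direct entry computation showing that if the set $J_B$ of rows with a nonzero subdiagonal entry is not contained in $I$, then $B_I$ acquires an entry $-b_{p,q}\in\{2,-1\}$ below the diagonal; and if $J_B\subseteq I$ then $L_I^2=E_m$ forces $B_I=B$. With this in hand, the ``hidden'' isomorphisms you worry about simply do not exist inside $B(n,m)$: any chain of \op/\oop\ connecting two BS-type matrices reduces to a chain of admissible \oop's, hence to admissible transpositions, giving $\sigma\approx\tau$ directly.

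In short, your plan would work in principle but is open-ended at the crucial step; the paper closes it with the single observation that \op\ is inert on BS-type matrices, after which no canonical forms or cohomological invariants are needed.
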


We say that a Bott manifold is \emph{decomposable} if it is isomorphic to a product of lower-dimensional Bott manifolds, and \emph{indecomposable} otherwise.
Using the fact that any indecomposable Bott manifold of BS type corresponds to an assembly consisting of a single ordered partition (see Proposition~\ref{prop_indecomposable}), 
we analyze the isomorphism classes of indecomposable Bott manifolds of BS type in terms of the corresponding sequences (see Theorem~\ref{cor_indecomposable}).

We note that sequences with pairwise distinct integers correspond to
\emph{toric} Schubert varieties. More precisely, if
\(\bm i = (i_1,\dots,i_m)\) consists of distinct integers, then the
Schubert variety \(X_{s_{i_1} \cdots s_{i_m}}\) in the full flag
variety is isomorphic to the Bott manifold \(\B_{\bm i}\) of BS type.
The classification of isomorphism classes of toric Schubert varieties
was studied in~\cite{LMP_directed_Dynkin}. Using \Cref{thm_isom}, we
give another proof of their result in type \(A\) (see
Corollary~\ref{cor_compare_with_directed_Dynkin}). 

The structure of this paper is as follows. In Section~\ref{section_Bott_manifolds_of_BS_type}, we recall the definition of Bott manifolds and describe their fans, with a focus on the special case of BS type. 
In Section~\ref{section_AOP}, we introduce assemblies of ordered partitions and consider their connection with  Bott manifolds of BS type. We use this correspondence to enumerate Bott manifolds of BS type, proving Theorem~\ref{thm_Bott_matrices} and Corollary~\ref{cor_enumeration} in Section~\ref{section_enumeration}. In Section~\ref{section_isomorphism}, we analyze isomorphism classes of Bott manifolds of BS type via admissible transpositions on assemblies, and prove Theorem~\ref{thm_isom}. 

\subsection*{Acknowledgments}
J.~Jeong and E.~Lee were supported by the National Research Foundation of Korea(NRF) grant funded by the Korea government(MSIT) (No.\ RS-2022-00165641, RS-2023-00239947) and by POSCO Science Fellowship of POSCO TJ Park Foundation. 
J.~S.~Kim was supported by the National Research Foundation of Korea (NRF) grant funded by the Korea government RS-2025-00557835.

\section{Bott manifolds of Bott--Samelson type}\label{section_Bott_manifolds_of_BS_type}

In this section, we recall Bott manifolds from~\cite{GK94Bott}, which are smooth projective toric varieties. Moreover, we consider a special family of Bott manifolds consisting of \emph{Bott manifolds of Bott--Samelson type} (or of \emph{BS type}). 
\begin{definition}[{\cite[\S 2.1]{GK94Bott}}]
	A \emph{Bott tower} \(\B_{\bullet}\) is an iterated \(\C P^1\)-bundle starting with a point: 
	\[
	\begin{tikzcd}[row sep = 0.5em]
		\B_m \rar \arrow[r, "\pi_m"]
			& \B_{m-1}\arrow[r, "\pi_{m-1}"]
			& \cdots \arrow[r, "\pi_2"]
			& \B_1 \arrow[r, "\pi_1"]
			& \B_0,   \\
		P(\underline{\C} \oplus \xi_m) \arrow[u, equal]
			& & 
			& \C P^1 \arrow[u, equal]
			& \{ \text{a point}\} \arrow[u, equal]
	\end{tikzcd}
	\]
	where each \(\B_j\) is the complex projectivization of the Whitney sum of a holomorphic line bundle~\(\xi_j\) and the trivial line bundle \(\underline{\C}\) over \(\B_{j-1}\). The total space \(\B_m\) is called a \emph{Bott manifold}. 	
\end{definition}

A Bott manifold \(\B_m\) is a smooth projective toric variety by the
above construction. Indeed, the induced projective bundles of a
  sum of holomorphic line bundles over a smooth projective toric
  variety is again a smooth projective toric variety
(cf.~\cite[\S7.3]{CLS11Toric}). By considering the first Chern classes
of the holomorphic line bundles \(\xi_j\) used to construct~\(\B_m\),
we obtain the ray generators of the fan of \(\B_m\). Because of the
fibration structure, the Picard group of \(\B_{j-1}\) is isomorphic to
the free abelian group of rank \(j-1\)
(cf.~\cite[Exercise~II.7.9]{Hartshorne}). Indeed, the Picard group of
\(\B_{j-1}\) is generated by the line bundles \(\gamma_{j-1,k}\) for
\(1 \leq k < j\). Here, \(\gamma_{j,k}\) is the pullback of the
tautological line bundle over \(\B_k\) via the composition
\(\pi_{k+1} \circ \pi_{k+2}\circ \cdots \circ \pi_{j} \colon \B_j \to \B_{k}\) for
\(k < j\). Therefore, there exist integers \(b_{j,k} \in \Z\) for
\(1 \leq k < j\) such that
\[
\xi_j = \bigotimes_{1 \leq k < j} \gamma_{j-1,k}^{\otimes b_{j,k}},
\]
where, \(\gamma_{j-1,k}^{\otimes (-1)}\) means the dual of \(\gamma_{j-1,k}\).
Hence, the set \(\{b_{j,k}\}_{1 \leq k < j \leq m}\) of integers
determines the Bott manifold \( \B_m \).

Let \([B_{j,k}]\) be the \( m\times m \) lower triangular matrix
given by \(B_{j,k} = b_{j,k}\) and \( B_{k,j}=0 \) for
\(1 \leq k < j \leq m\), and \(B_{j,j} = -1\) for \( j\in [m] \). We
call such a matrix a \emph{Bott matrix}. A Bott manifold determines a
Bott matrix and vice versa. Indeed, as explained
in~\cite[\S2.3]{GK94Bott}, there is a bijection between the set of
Bott manifolds of dimension \(m\) and the set of \(m \times m\) lower
triangular integer matrices with diagonal entries \( -1 \).

The fan \(\Sigma \subseteq \R^m\) of \(\B_m\) can be expressed in terms of
the integers \(\{b_{j,k}\}_{1 \leq k < j \leq m}\) that determine~\(\B_m\).
For the Bott manifold \(\B_m\) determined by
\(\{b_{j,k}\}_{1 \leq k < j \leq m}\), the primitive generators of the rays
in the fan \(\Sigma \subseteq \R^m\) of \(\B_m\) are the column vectors of
the following matrix
\begin{equation}\label{PreBottM}
          [\mathbf{e}_1 \ \mathbf{e}_2 \ \cdots \ \mathbf{e}_m \ 
          \mathbf{v}_1 \ \mathbf{v}_2 \ \cdots \ \mathbf{v}_m] :=
	\begin{bmatrix}
		1 & 0 & \cdots & & 0 & -1 & 0 & \cdots & & 0 \\
		0& 1 & \ddots & &  & b_{2,1}& -1 & \ddots & & \vdots \\
		\vdots& \ddots & 1 & & \vdots & b_{3,1} & b_{3,2} & -1 & & \\
		& & & \ddots &0 & \vdots & \vdots & \ddots & \ddots & 0\\
		0& &  \cdots & 0& 1 & b_{m,1} & b_{m,2} & \dots & b_{m,m-1} & -1 
	\end{bmatrix}.
\end{equation}
Note that \(\mathbf{e}_1,\dots,\mathbf{e}_m\) are the standard basis
vectors of \(\R^m\), and the matrix
\([\mathbf{v}_1 \ \mathbf{v}_2 \ \cdots \ \mathbf{v}_m]\) is the Bott
matrix associated to~\(\B_m\). For any set \(S\) of \( m \) ray
generators, \( S \) forms a maximal cone of \(\Sigma\) if and only if
\[
\{\mathbf{e}_i, \mathbf{v}_i\} \not\subseteq S \quad \text{ for all }i \in [m]. 
\]
Because of this description, we can see that the fan \(\Sigma\) is the
normal fan of a polytope that is combinatorially equivalent to the
\(m\)-dimensional cube \([0,1]^m\).

\begin{example}
For \(m = 2\), the primitive ray generators of the fan \(\Sigma\) of \(\B_2\) are the column vectors of the following matrix. 
\[
\begin{bmatrix}
	1 & 0 & -1 & 0 \\
	0 & 1 & b_{2,1} & -1
\end{bmatrix} = [\mathbf{e}_1 \ \mathbf{e}_2 \ \mathbf{v}_1 \ \mathbf{v}_2]. 
\]
There are four maximal cones 
\[
\Cone(\{\mathbf{e}_1, \mathbf{e}_2\}), 
\Cone(\{\mathbf{e}_1, \mathbf{v}_2\}), 
\Cone(\{\mathbf{v}_1,\mathbf{e}_2\}),
\Cone(\{\mathbf{v}_1,\mathbf{v}_2\}), 
\]
and we depict the fan of \(\B_2\) when \(b_{2,1} = -2\) in \Cref{figfanB2}.

\begin{figure}
		\centering
		\begin{tikzpicture}
			\fill[yellow!20] (0,0)--(2,0)--(2,2)--(0,2)--cycle;
			\fill[orange!20] (0,0)--(0,2)--(-2,2)--(-2,-2)--(-1,-2)--cycle;
			\fill[blue!15] (0,0)--(-1,-2)--(0,-2)--cycle;
			\fill[green!15] (0,0)--(0,-2)--(2,-2)--(2,0)--cycle;
			
			\draw[gray,->] (-2,0)--(2,0);
			\draw[gray, ->] (0,-2)--(0,2);
			\draw[very thick,->] (0,0)--(1,0) node[at end, above] {\(\mathbf{e}_1\)};
			\draw[very thick, ->] (0,0)--(0,1) node[at end, right] {\(\mathbf{e}_2\)};
			\draw[very thick, ->] (0,0)--(-1,-2) node[at end, below] {\(\mathbf{v}_1\)};
			\draw[very thick, ->] (0,0)--(0,-1) node[at end, right] {\(\mathbf{v}_2\)};
		\end{tikzpicture}
		\caption{The fan of a Bott manifold \(\B_2\) for \(b_{2,1}  = -2\).}
		\label{figfanB2}
\end{figure}
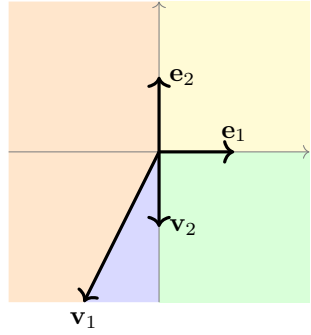
\end{example}

There is a special family of Bott manifolds called Bott manifolds of
BS type. Each of these is associated with a sequence of integers.

\begin{definition}\label{def_Bott_manifold_of_BS_type}
  Let \(\bm{i}= (i_1,\dots,i_m) \in [n]^m\). We define the Bott matrix
  \(B_{\bm{i}} = [b_{j,k}]\) as follows: For \(1 \leq k < j \leq m\),
  we set
  \[
    b_{j,k} = \begin{cases}
      -2 & \text{ if }|i_j - i_k| = 0, \\
      1 & \text{ if }|i_j - i_k| = 1, \\
      0 & \text{ otherwise},
    \end{cases}
    \qquad b_{k,j} = 0,
  \]
  and set \(b_{j,j} = -1\) for \(j \in [m]\). We denote by
  \(\B_{\bm i}\) the Bott manifold determined by the Bott matrix~\(B_{\bm i}\).

   We say that a Bott matrix \(B\) is \emph{of BS
    type} if \(B = B_{\bm i}\) for some \(\bm i\).
  We also say that a Bott manifold is \emph{of BS type} if the
  corresponding Bott matrix is of BS type. 
\end{definition}

\begin{example} 
  Let \(\bm i = (1,2,1,3)\). Then the Bott matrix \(B_{\bm i}\) is 
\[
B_{\bm i} = 
\begin{bmatrix}
	-1 & 0 & 0 & 0 \\
	1 & -1 & 0 & 0 \\
	-2 & 1 & -1 & 0 \\
	0 & 1 & 0 & -1
\end{bmatrix}. 
\]
\end{example}

\begin{definition}\label{def:1}
  For positive integers \( n \) and \( m \), we define \(B(n,m)\) to
  be the set of Bott matrices of BS type that can be obtained from the
  sequences in \([n]^m\):
\[
B(n,m) \colonequals \{ B_{\bm{i}} : \bm{i} \in [n]^m \}.
\]
We also define \(b(n,m)\) to be the number of elements in \(B(n,m)\).
\end{definition}

By the definition of \(B(n,m)\), we have
\(B(n_1,m) \subseteq B(n_2,m)\) whenever \(n_1 \le n_2\). Note that
every element in \( B(n,m) \) is an \( m\times m \) lower triangular
matrix such that the diagonal entries are all \( -1 \) and the entries
below the main diagonal are \(-2\), \( 1 \), or \(0\). Hence,
\( b(n,m)\le 3^{(m-1)m/2} \) for all \( n \), and the weakly
increasing sequence \((b(n,m))_{n \in \Z_{>0}}\) converges. The
following proposition shows that the sequence
\((b(n,m))_{n \in \Z_{>0}}\) has a constant value for \(n \geq 2m-1\).

\begin{proposition}\label{prop_seq_Bnm_converges}
  For any positive integers \( m \) and \( n \) with \(n \geq 2m-1\),
  we have \(B(n,m) = B(2m-1,m)\). 
\end{proposition}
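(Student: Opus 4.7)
My plan is to prove the nontrivial inclusion \(B(n,m) \subseteq B(2m-1,m)\) for \(n \geq 2m-1\); the reverse inclusion is immediate from the monotonicity \(B(n_1,m) \subseteq B(n_2,m)\) for \(n_1 \leq n_2\) noted just before the proposition. To establish the nontrivial inclusion, I want to show that for every sequence \(\bm i \in [n]^m\), there is a sequence \(\bm i' \in [2m-1]^m\) producing the same Bott matrix: \(B_{\bm i} = B_{\bm i'}\).

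The key observation driving the argument is that, by \Cref{def_Bott_manifold_of_BS_type}, the entries of \(B_{\bm i}\) depend only on the relation \(|i_j - i_k| \in \{0,1,{\geq 2}\}\) for each pair \((j,k)\). Thus any value-preserving injection \(\phi\) on the set of distinct entries of \(\bm i\) that preserves this trichotomy will yield \(B_{\phi(\bm i)} = B_{\bm i}\), where \(\phi(\bm i) = (\phi(i_1),\dots,\phi(i_m))\).

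I would construct such a \(\phi\) explicitly by a greedy compression. Let \(v_1 < v_2 < \cdots < v_s\) be the distinct values appearing in \(\bm i\); note \(s \leq m\). Define \(v'_1 \colonequals 1\) and, for \(1 \leq a < s\),
\[
v'_{a+1} \colonequals
\begin{cases}
v'_a + 1 & \text{if } v_{a+1} - v_a = 1, \\
v'_a + 2 & \text{if } v_{a+1} - v_a \geq 2.
\end{cases}
\]
Then set \(\phi(v_a) \colonequals v'_a\) and \(\bm i' \colonequals \phi(\bm i)\). By construction \(v'_a \leq 1 + 2(a-1) \leq 2s-1 \leq 2m-1\), so \(\bm i' \in [2m-1]^m\). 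Since \(v'_1 < \cdots < v'_s\), equality \(v_a = v_b\) corresponds to equality \(v'_a = v'_b\). For adjacent indices \(v'_{a+1} - v'_a = 1\) precisely when \(v_{a+1}-v_a = 1\), and for non-adjacent indices \(a < b-1\) we have \(v'_b - v'_a \geq 2\) while also \(v_b - v_a \geq 2\). Hence \(\phi\) preserves the trichotomy on all pairs of distinct values, and the equal-value case is preserved tautologically. Therefore \(B_{\bm i'} = B_{\bm i}\), giving \(\bm i \in B(2m-1,m)\).

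There is no real obstacle here beyond the bookkeeping of the trichotomy, which is resolved by noting that among distinct \(v_a\)'s only consecutive pairs in the sorted order can differ by exactly \(1\); once that is observed, the greedy gaps of size \(1\) or \(2\) automatically handle both the \(=1\) and \(\geq 2\) cases simultaneously, and the bound \(v'_s \leq 2m-1\) drops out.
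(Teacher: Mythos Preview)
Your argument is correct. The core observation---that \(B_{\bm i}\) depends only on the trichotomy \(|i_j-i_k|\in\{0,1,\geq 2\}\)---is exactly what drives the paper's proof as well, so the two proofs share the same underlying idea. The packaging differs: the paper proceeds by induction on \(n\), showing \(B(n+1,m)\subseteq B(n,m)\) for each \(n\ge 2m-1\) by locating (via pigeonhole on the sorted values) a single gap of size \({>}2\) and shrinking it by one, whereas you perform a one-shot greedy compression that simultaneously resets every gap to its minimal admissible value (\(1\) or \(2\)). Your route is slightly more direct---no induction or pigeonhole step is needed, and the bound \(v'_s\le 2s-1\le 2m-1\) falls out immediately---while the paper's version has the minor advantage of making explicit the stabilization \(B(n,m)=B(n+1,m)\) at each stage.
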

\begin{proof}
  Let \(n \geq 2m-1\). We claim that \(B(n,m) = B(n+1,m)\). Since we
  have the inclusion \(B(n,m) \subseteq B(n+1,m)\), it suffices to
  prove that \(B(n+1,m) \subseteq B(n,m)\).

  Suppose \( B\in B(n+1,m) \). Then \( B=B_{\bm i} \) for some
  \(\bm i = (i_1,\dots,i_m) \in [n+1]^m\). If \( n+1 \) is not an
  element of \( \bm i \), then \( \bm i \in [n]^m \). Hence,
  \( B_{\bm i}\in B(n,m) \). If \( 1 \) is not an element of
  \( \bm i \), then \( B_{\bm i} = B_{\bm i'} \), where
  \( \bm i' = (i_1-1,\dots,i_m-1) \in [n]^m \). Thus, we also have
  \( B_{\bm i}\in B(n,m) \).

  Now suppose that \( \bm i \) contains both \( 1 \) and \( n+1 \).
  Let \( (a_1 , \dots, a_m) \) be the weakly increasing sequence
  obtained by sorting the elements in \( \bm i \). Since \( a_1=1 \),
  \( a_m= n+1 \), and
  \( \sum_{j=1}^{m-1} (a_{j+1}-a_{j}) = n \ge 2m-1 \), there exists
  \( j\in [m-1] \) such that \( a_{j+1}-a_{j}>2 \). Let
  \(\bm{i}' = (i_1',\dots,i_m')\in [n]^m \) be the sequence given by
\[
i_k' = \begin{cases}
	i_k & \text{ if } i_k\le a_j, \\
	i_{k} -1 & \text{ if } i_k \ge a_{j+1}. 
\end{cases}
\]
By \Cref{def_Bott_manifold_of_BS_type} and the inequality
\( a_{j+1}-a_{j}>2 \), we have \(B_{\bm i} = B_{\bm i'}\). Therefore,
\(B_{\bm i} \in B(n,m)\). Accordingly, the claim
\(B(n+1,m) \subseteq B(n,m)\) holds and we are done.
\end{proof}

\begin{remark}\label{rmk_BS_type} 
Let \(G = \GL_{n+1}(\C)\) and let \(B\) be a Borel subgroup of \(G\). 
For a sequence \(\bm i\), we have another geometric object, called a \emph{Bott--Samelson variety} \(Z_{\bm i}\), which is defined by the quotient space 
\[
Z_{\bm i} \colonequals (P_{i_1} \times P_{i_2} \times \cdots \times P_{i_m})/B^m. 
\]
Here, for \(i \in [n]\), \(P_i\) is the minimal parabolic subgroup of \(G\) containing \(B\),
and the action of~\(B^m\) on the product of minimal parabolic subgroups is defined by 
\[
(b_1,b_2,\dots,b_m) \cdot (p_1,p_2,\dots,p_m) 
= (p_1b_1,b_1^{-1}p_2b_2, \dots, b_{m-1}^{-1}p_mb_m) 
\]
for \((b_1,b_2,\dots,b_m) \in B^m\) and \((p_1,p_2,\dots,p_m) \in P_{i_1} \times P_{i_2} \times \cdots \times P_{i_m}\). 
A Bott--Samelson variety is not necessarily toric, but it is diffeomorphic to the Bott manifold \(\B_{\bm i}\) as shown in~\cite{GK94Bott}. Moreover, the Bott manifold \(\B_{\bm i}\) can be obtained as a toric degeneration of the Bott--Samelson variety \(Z_{\bm i}\) (see~\cite{Pasquier10}). This is the reason why we call such Bott manifolds of \emph{Bott--Samelson type}. 
\end{remark}

\begin{remark}\label{rmk_toric_Schubert}
	A family of Bott manifolds of BS type includes \emph{toric Schubert varieties} in a full flag variety of Lie type \(A\). For a simple algebraic group \(G\) and a Borel subgroup \(B\) of \(G\), the homogeneous space \(G/B\) is a smooth projective algebraic variety, called a \emph{full flag variety}. The Schubert varieties are subvarieties of \(G/B\) parameterized by the Weyl group \(W\) of \(G\). 
    The maximal torus \(T\) in \(B\) acts on \(G/B\) via the left multiplication, and the set of fixed points of this action can be parameterized by the Weyl group \(W\). For \(w \in W\), we denote by \(wB\) the corresponding fixed point in \(G/B\). The Schubert variety \(X_w\) is defined by the closure \(\overline{BwB/B}\) of the \(B\)-orbit of \(wB\) in \(G/B\). Any Schubert variety admits \(T\)-action induced by that on \(G/B\) and we say that \(X_w\) is \emph{toric} if \(X_w\) is a toric variety with respect to the action of a quotient of \(T\). 
    For a toric Schubert variety~\(X_w\) in \(G/B\), suppose that \(\bm{i}\) is a reduced decomposition of \(w\). Then the toric Schubert variety \(X_w\) is \(T\)-equivariantly isomorphic to the Bott--Samelson variety \(Z_{\bm i}\) (see~\cite[Chapter~18]{AndersonFulton24}). Moreover, in this case, \(X_w\) is isomorphic to \(\B_{\bm i}\).
\end{remark}

\section{Assemblies of ordered partitions}\label{section_AOP}

In this section, we introduce assemblies of ordered partitions. We
then find a surjection from the set of assemblies of ordered
partitions to the set of Bott matrices of BS type (see
\Cref{prop_sur}). This map will be used to enumerate the number
\(b(n,m)\) of Bott manifolds of BS type (see
\Cref{prop_equivalence_Bnm_POPnm} and \Cref{thm_Bott_matrices}).

\begin{definition}\label{def:2}
  An \emph{ordered partition} is a sequence
  \( \tau=(\tau^1,\dots,\tau^r) \) of mutually disjoint nonempty sets.
  We say that \( \{\tau^1,\dots,\tau^r\} \) is the \emph{underlying
    partition} of \( \tau \). We define
  \[
    U(\tau) = \tau^1 \cup \cdots \cup \tau^r, \qquad \min(\tau) = \min (U(\tau)).
  \]
\end{definition}

Note that the underlying partition of an ordered partition \( \tau \)
is a partition of \( U(\tau) \).

\begin{definition}\label{def_assembly_of_ordered_partitions}
  For a positive integer \( m \), an \emph{assembly of ordered
    partitions} of \( [m] \) is a sequence
  \( \sigma=(\sigma_1,\dots,\sigma_\ell) \) of ordered partitions satisfying the following conditions:
  \begin{enumerate}
  \item \( \{U(\sigma_1),\dots, U(\sigma_\ell)\} \) is a partition of \( [m] \),
  \item \( \min(\sigma_1) < \cdots < \min(\sigma_\ell) \).
  \end{enumerate}

  Let \( \sigma_a = (\sigma_a^1,\dots,\sigma_a^{r_a}) \)
  for \( a\in [\ell] \). We say that the assembly \( \sigma=(\sigma_1,\dots,\sigma_\ell) \) of ordered
  partitions has
  \emph{bound} \( n \) if 
\begin{equation}\label{eq_condition_on_r_and_n}
r_1+\dots+r_{\ell} + ({\ell}-1) \leq n.
\end{equation} 
We denote by \(\AOP(n,m)\) the set of assemblies of ordered partitions
of \([m]\) with bound \( n \).
\end{definition}

For example,
\(( (\{ 2,4,6 \}, \{ 1,5 \}), (\{ 3 \}, \{ 7,9 \}), (\{ 8\}) )\) is
an assembly of the ordered partition of~\([9]\). For
convenience, we use ``\(|\)'' to distinguish each block in an ordered
partition, that is,
\[
( (\{ 2,4,6 \}, \{ 1,5 \}), (\{ 3 \}, \{ 7,9 \}), (\{ 8\}) )
= ( (2 \, 4\, 6 \, | \, 1 \, 5), (3 \, | \, 7 \, 9 \, | \, 8)).
\]
Note that, for
\( \sigma = (\sigma_1,\dots,\sigma_\ell)\in \AOP(n,m) \), since
\( \min(\sigma_1) < \cdots < \min(\sigma_\ell) \), we can identify
\( \sigma \) with the set \( \{\sigma_1,\dots,\sigma_\ell\} \) of
ordered partitions.

Let \( \beta:[n]^m\to B(n,m) \) be the map defined by
\(\beta(\bm i) = B_{\bm i}\), which is surjective by definition. In what
follows, we define two maps \(\alpha \colon [n]^m \to \AOP(n,m)\) and
\(F \colon \AOP(n,m)\to B(n,m)\) such that the following diagram
commutes.
\begin{equation}\label{eq_commutes_psi_F_mu}
\begin{tikzcd}
	{[n]^m} \arrow[r, "\alpha"] \arrow[rd, "\beta"']
		& \AOP(n,m) \arrow[d, "F"] \\
	& B(n,m) 
\end{tikzcd}
\end{equation}

We define the map \(\alpha \colon [n]^m \to \AOP(n,m)\) as follows.

\begin{definition}\label{def:alpha}
  Consider a sequence \(\bm i = (i_1,\dots,i_m) \in [n]^m\). Let
  \( a_1,\dots,a_t \) be the integers appearing in \( \bm i \), where
  \( a_1 < \cdots <a_t \). For each \( j\in [t] \), let \( A_j \) be
  the set of indices of the entries equal to \( a_j \)'s in
  \( \bm i \), that is, \( A_j = \{k\in [m]: i_k = a_j\} \). We define
  the ordered partitions \( \sigma_1, \dots,\sigma_\ell \) as follows.
  First, \( \sigma_1 \) is the ordered partition
  \( (A_1,\dots,A_{r_1}) \), where \( r_1 \) is the largest integer
  such that \( a_1,\dots,a_{r_1} \) are consecutive integers. Next,
  \( \sigma_2 \) is the ordered partition
  \( (A_{r_1+1},\dots,A_{r_1+r_2}) \), where \( r_2 \) is the largest
  integer such that \( a_{r_1+1},\dots,a_{r_1+r_2} \) are consecutive
  integers. Continuing in this way, the ordered partitions
  \( \sigma_1,\dots,\sigma_\ell \) are defined until we have
  \( r_1 + \cdots + r_\ell = t \). Finally, we define
  \( \alpha(\bm i) \) to be
  \( \sigma = (\sigma_1,\dots,\sigma_\ell)\in \AOP(n,m) \).

\end{definition}

For example, for \(\bm i = (6,3,8,3,6,2,9) \in [9]^7\), we have
\( a_1=2 \), \( a_2=3 \), \( a_3=6 \), \( a_4=8 \), \( a_5=9 \), and
\( A_1 = \{6\} \), \( A_2 = \{2,4\} \), \( A_3=\{1,5\} \),
\( A_4= \{3\}\), \( A_5= \{7\}\); hence,
\[
\sigma(\bm i) = ((6 \, | \,2 \, 4 ), (1 \, 5), (3\, | \, 7)).
\]

The next lemma follows immediately from the definition of the map
\( \alpha \). 

\begin{lemma}\label{lemma_sigma_only_consider_differences}
  Let \(\bm i = (i_1,\dots,i_m) \in [n]^m\). For any positive integer
  \(k < \min\{i_1,\dots,i_m\}\), we have
  \(\sigma(\bm i) = \sigma(\bm i - k)\), where
  \(\bm i - k = (i_1 - k,i_2-k,\dots,i_m-k)\).
\end{lemma}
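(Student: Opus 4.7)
The claim is essentially that the construction of $\alpha$ only sees the \emph{relative} positions of the distinct values appearing in $\bm i$, not their absolute sizes; translating all entries by a constant $k$ small enough to keep them in $[n]$ does not alter any of the intermediate data from which $\alpha(\bm i)$ is built. So the plan is just to unwind \Cref{def:alpha} on both sides and match the pieces.

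Concretely, first I would record that under the hypothesis $k<\min\{i_1,\dots,i_m\}$, every entry of $\bm i - k$ is a positive integer, so $\bm i - k\in [n]^m$ and $\alpha(\bm i-k)$ is defined. If $a_1<\cdots<a_t$ are the distinct entries of $\bm i$, then $a_1-k<\cdots<a_t-k$ are precisely the distinct entries of $\bm i - k$, and for each $j\in [t]$ the index set
\[
A_j=\{s\in [m]:i_s=a_j\}
\]
coincides with the index set $\{s\in [m]:i_s-k=a_j-k\}$ used in producing $\alpha(\bm i - k)$. Thus the building blocks $A_1,\dots,A_t$ are literally identical for $\bm i$ and $\bm i - k$.

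Next I would observe that the grouping of $A_1,\dots,A_t$ into the ordered partitions $\sigma_1,\dots,\sigma_\ell$ depends only on the pattern of consecutiveness of the sequence $a_1,\dots,a_t$, i.e.\ on whether $a_{s+1}-a_s=1$. Since $(a_{s+1}-k)-(a_s-k)=a_{s+1}-a_s$, this pattern is preserved under the translation by $-k$. Therefore the integers $r_1,\dots,r_\ell$ and hence the ordered partitions $\sigma_1,\dots,\sigma_\ell$ produced for $\bm i$ and for $\bm i - k$ are the same, giving $\sigma(\bm i)=\sigma(\bm i - k)$ as required.

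There is no real obstacle here; the only step that needs a line of care is checking that the hypothesis $k<\min\{i_1,\dots,i_m\}$ really is the right one to ensure $\bm i - k\in [n]^m$ (it is: it forces $a_1-k\ge 1$), so that $\alpha(\bm i - k)$ is defined as an element of $\AOP(n,m)$ rather than only of some $\AOP(n',m)$ with $n'<n$.
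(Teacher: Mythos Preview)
Your proof is correct and is exactly the kind of unwinding the paper has in mind: the paper itself does not give a proof beyond the remark that the lemma ``follows immediately from the definition of the map \( \alpha \),'' and your argument simply makes that remark explicit by checking that the distinct values, the index sets \(A_j\), and the consecutiveness pattern are all preserved under the shift.
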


Note that, using the notation in the construction of
\( \alpha(\bm i) \) above, the largest integer appearing in
\( \bm i \) is at least \(r_1 + \cdots + r_{\ell} + ({\ell}-1)\).
Hence, for \(\bm i \in [n]^m\), the inequality
\eqref{eq_condition_on_r_and_n} holds, which implies that
\(\sigma(\bm i) \in \AOP(n,m)\).

\begin{definition}\label{def_neighbors}
  Let \(\sigma = (\sigma_1,\dots,\sigma_\ell) \in \AOP(n,m)\), where
  \( \sigma_i = (\sigma_i^1 \, |\, \cdots \,|\, \sigma_i^{r_i}) \).
  For integers \(1 \leq k < j \leq m\), we say that \(k\) and \(j\)
  are \emph{neighbors} in \(\sigma\) if they are in the same block of
  an ordered partition \(\sigma_a\), or if they appear in consecutive
  blocks of \(\sigma_a\). In other words, \(k\) and \(j\) are
  neighbors in \( \sigma \) if \(k \in \sigma_a^b \) and
  \( j \in \sigma_a^{b-1} \cup \sigma_a^b \cup \sigma_a^{b+1} \) for
  some \( a \) and \( b \).
\end{definition}

Now we define the map \( F \colon \AOP(n,m)\to B(n,m) \).

\begin{definition}\label{def:F}
Consider
\[
  \sigma = ((\sigma_1^1 \, |\, \cdots \,|\, \sigma_1^{r_1}),
  \dots,(\sigma_{\ell}^1 \,|\, \cdots \,|\, \sigma_{\ell}^{r_{\ell}}))
  \in \AOP(n,m).
\]
 For \(1 \leq k < j\leq m\), let
\[
B_{j,k} = 
\begin{cases}
	-2 & \text{ if }j,k \in \sigma_a^b \text{ for some \( a \) and \( b \)}, \\
	1 & \text{ if } j \in \sigma_a^b \text{ and }k \in \sigma_a^{b-1} \cup \sigma_a^{b+1}  \text{ for some \( a \) and \( b \)}, \\
	0 & \text{ otherwise}.
\end{cases}
\]
We also set \(B_{j,j} = -1\) for \(j \in [m]\) and \(B_{j,k} = 0\) for
\(1 \leq j < k \leq m\). Then we define \( F(\sigma) \) to be the
\( m\times m \) Bott matrix~\([B_{j,k}]\).
\end{definition}

Note that for \(1 \leq k < j \leq m\), we have \(B_{j,k} \neq 0\) if
and only if \(j\) and \(k\) are neighbors in \( \sigma \). The
following proposition shows that the diagram in
\eqref{eq_commutes_psi_F_mu} commutes, and consequently, the map
\( F \colon \AOP(n,m)\to B(n,m) \) is surjective.

\begin{proposition}\label{prop_sur}
  For any \( \bm i \in [n]^m \), we have
  \( F(\alpha(\bm i)) = \beta(\bm i) \). Moreover, the map
  \( F:\AOP(n,m) \to B(n,m) \) is surjective.
\end{proposition}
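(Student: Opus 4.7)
The plan is to establish the first equality $F(\alpha(\bm i)) = \beta(\bm i)$ by a direct entrywise comparison of the Bott matrices, and then deduce surjectivity of $F$ for free from surjectivity of $\beta$ (which holds by the very definition of $B(n,m)$).

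To check $F(\alpha(\bm i)) = \beta(\bm i)$, fix $\bm i = (i_1,\dots,i_m)$ and use the notation $a_1 < \cdots < a_t$, $A_p = \{k\in[m]: i_k = a_p\}$, and $\sigma = \alpha(\bm i) = (\sigma_1,\dots,\sigma_\ell)$ from \Cref{def:alpha}, so that each $A_p$ equals some $\sigma_a^b$. For $1\le k<j\le m$, I will compare $F(\sigma)_{j,k}$ with $\beta(\bm i)_{j,k}=b_{j,k}$ by splitting into the cases in \Cref{def_Bott_manifold_of_BS_type}, parameterized by the indices $p,q$ with $i_j = a_q$ and $i_k = a_p$ (necessarily $q\ge p$ need not hold, so use the $a_p,a_q$ values). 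The case $i_j=i_k$ gives $j,k$ in the same block $A_p=\sigma_a^b$, hence $F(\sigma)_{j,k}=-2=b_{j,k}$. The case $|i_j-i_k|=1$ means $\{i_j,i_k\}=\{a_p,a_{p+1}\}$ with $a_{p+1}-a_p=1$; by the definition of $\alpha$, consecutive values of $a$'s lie in the same ordered partition $\sigma_a$ in consecutive blocks, yielding $F(\sigma)_{j,k}=1=b_{j,k}$.

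The remaining case is $|i_j-i_k|\ge 2$. Here I need to argue $F(\sigma)_{j,k}=0$, which amounts to showing $j$ and $k$ are not neighbors in $\sigma$ in the sense of \Cref{def_neighbors}. If $i_j$ and $i_k$ are placed in different ordered partitions $\sigma_a,\sigma_{a'}$, this is immediate. If they are placed in the same $\sigma_a$ but in blocks $\sigma_a^b$ and $\sigma_a^{b'}$, then the defining property that the $a$-values corresponding to the blocks of $\sigma_a$ form a maximal run of consecutive integers forces $|a_q-a_p|=|b-b'|$, and $|i_j-i_k|\ge 2$ yields $|b-b'|\ge 2$, so the blocks are non-adjacent; hence $F(\sigma)_{j,k}=0=b_{j,k}$. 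Combining the three cases establishes the equality of matrices.

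For the second assertion, the map $\beta \colon [n]^m \to B(n,m)$ is surjective by \Cref{def:1}. Since $F\circ\alpha = \beta$ by the preceding paragraph, any element of $B(n,m)$ lies in the image of $F$, giving surjectivity of $F$.

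The only delicate point is the $|i_j-i_k|\ge 2$ case, and specifically the need to distinguish whether $j,k$ end up in the same ordered partition or not; this boils down to the maximality in the definition of the $r_a$'s in \Cref{def:alpha}, which guarantees that runs of consecutive $a$-values are packaged into a single $\sigma_a$ and gaps of size $\ge 2$ start a new ordered partition. Once this observation is in place, the whole argument is a straightforward case check.
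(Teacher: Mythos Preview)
Your proposal is correct and follows essentially the same approach as the paper's own proof: both verify $F(\alpha(\bm i))=\beta(\bm i)$ via the three-way case split on $|i_j-i_k|\in\{0,1,\ge 2\}$, matching each case to the corresponding clause in \Cref{def:F}, and then deduce surjectivity of $F$ from the factorization $\beta=F\circ\alpha$ together with the surjectivity of $\beta$. Your write-up is slightly more explicit in the $|i_j-i_k|\ge 2$ case (spelling out why $|b-b'|\ge 2$ when $j,k$ lie in the same $\sigma_a$), but this is exactly the content the paper compresses into the phrase ``by the definition of the map~$\alpha$.''
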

\begin{proof}
  Let \(\bm i = (i_1,\dots,i_m) \in [n]^m\), and let
  \[
    \alpha(\bm i) = \sigma = ((\sigma_1^1 \, |\, \cdots \,|\,
    \sigma_1^{r_1}), \dots,(\sigma_{\ell}^1 \,|\, \cdots \,|\,
    \sigma_{\ell}^{r_{\ell}})) .
  \]
  By the definition of the map \(\alpha\), for
  \(1 \leq k < j \leq m\), we have
\begin{enumerate}
	\item \(i_j = i_k\) if and only if \(j,k \in \sigma_a^b\) for some \( a \) and \( b \);
	\item \(|i_j - i_k| = 1\) if and only if \(j \in \sigma_a^b\)
and \(k \in \sigma_a^{b-1} \cup \sigma_a^{b+1}\) for some \( a \) and \( b \);
	\item \(|i_j - i_k| >1 \) if and only if \(j\) and \(k\) are
not neighbors in \(\sigma\).
\end{enumerate}
Thus, the definition of the matrix \( F(\sigma) = [B_{j,k}] \) in
\Cref{def:F} is identical to that of
\( \beta(\bm i) = B_{\bm i} \) in \Cref{def_Bott_manifold_of_BS_type}.
Accordingly, we obtain \(F(\alpha(\bm i)) = \beta(\bm i)\).

Since \( \beta = F \circ \alpha\) and \(\beta : [n]^m \to B(n,m)\) is
surjective, the map \( F:\AOP(n,m) \to B(n,m) \) is also surjective.
\end{proof}

\section{Enumeration of Bott manifolds of Bott--Samelson type}\label{section_enumeration}

In this section, we prove \Cref{thm_Bott_matrices}, which relates Bott
manifolds of BS type and assemblies of ordered partitions. As an
application, we give a proof of Corollary~\ref{cor_enumeration},
which enumerates the number \(b(n,m)\) of Bott manifolds of BS type of
dimension \(m\) determined by \(\bm i \in [n]^m\).

We begin with some definitions. For an ordered partition
\(\eta = (\eta^1,\dots,\eta^r)\), we define
\(\overline{\eta} = (\eta^r,\dots,\eta^1)\). We define an equivalence
relation \(\sim\) on \(\AOP(n,m)\) as follows.

\begin{definition}
  Let \(\sigma = (\sigma_1,\dots,\sigma_{\ell})\) and
  \(\tau = (\tau_1,\dots,\tau_{\ell'})\) be elements in \(\AOP(n,m)\).
  We define \(\sigma \sim \tau\) if \(\ell = \ell'\) and
  \(\sigma_a = \tau_a\) or \(\sigma_a = \bar{\tau}_a\) for all
  \(a = 1,\dots,\ell\).
\end{definition}

\begin{example}\label{example_relation_on_POP}
  There are \(14\) equivalence classes in \(\AOP(5,3)/\sim\) as
  follows:
\[
\begin{array}{lll}
\{(1,2,3)\}, \\
\{(1 \, 2 \, | \, 3)\} \sim \{(3 \, | \, 1 \, 2)\}, & 
\{(1 \, 3 \, | \, 2)\} \sim \{(2 \, | \, 1 \, 3)\}, &
\{(2 \, 3 \, | \, 1)\} \sim \{(1 \, | \, 2 \, 3)\}, \\
\{(1 \, | \, 2 \, | \, 3)\} \sim \{(3 \, | \, 2 \, | \, 1)\}, & 
\{(1 \, | \, 3 \, | \, 2)\} \sim \{(2 \, | \, 3 \, | \, 1)\}, & 
\{(2 \, | \, 1 \, | \, 3)\} \sim \{(3 \, | \, 1 \, | \, 2)\}, \\
\{ (1 \, 2), (3)\}, & \{(1 \, 3), (2)\}, & \{(2 \, 3), (1)\}, \\
\{(1 \, | \, 2), (3)\} \sim \{(2 \, | \, 1), (3)\}, & 
\{(1 \,|\,3),(2)\} \sim \{(3\,|\,1),(2)\}, & 
\{(2\,|\,3),(1)\} \sim \{(3\,|\,2),(1)\}, \\
\{(1),(2),(3)\}.
\end{array}
\]
\end{example}

The following proposition gives a connection between Bott matrices of
BS type and assemblies of ordered partitions.

\begin{proposition} \label{prop_equivalence_Bnm_POPnm}
	There is a bijection between the set \(B(n,m)\) of Bott matrices of BS type and the set \(\AOP(n,m)/\sim\) of equivalence classes of assemblies of ordered partitions. 
\end{proposition}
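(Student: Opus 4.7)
The plan is to show that the surjective map $F:\AOP(n,m)\to B(n,m)$ from \Cref{prop_sur} descends to the quotient $\AOP(n,m)/\sim$ and induces a bijection $\overline{F}$ onto $B(n,m)$. Equivalently, one needs to establish that, for $\sigma,\tau\in\AOP(n,m)$, we have $F(\sigma)=F(\tau)$ if and only if $\sigma\sim\tau$.

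For the direction $\sigma\sim\tau \Rightarrow F(\sigma)=F(\tau)$, I would observe from \Cref{def:F} that each entry $B_{j,k}$ of $F(\sigma)$ depends only on whether $j$ and $k$ lie in a common block of some $\sigma_a$, or in two consecutive blocks of some $\sigma_a$. Both properties are preserved under reversing the block order of any $\sigma_a$, since neither the underlying set of blocks nor the collection of unordered pairs of adjacent blocks is affected by reversal. Hence $F$ is constant on $\sim$-classes.

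For the converse, I would reconstruct $\sigma$ from $B=F(\sigma)$ up to $\sim$ in three steps. First, the relation $j\equiv k$ on $[m]$ defined by $B_{\max(j,k),\min(j,k)}=-2$, together with reflexivity, is an equivalence relation whose classes coincide with the blocks $\sigma_a^b$ of $\sigma$, by \Cref{def:F}. This recovers the collection of blocks as unordered sets. Second, form an auxiliary graph $\Gamma$ whose vertices are these blocks and whose edges join $X,Y$ whenever $B_{\max(j,k),\min(j,k)}=1$ for some $j\in X$, $k\in Y$. By \Cref{def:F}, the edges of $\Gamma$ correspond exactly to consecutive pairs $(\sigma_a^b,\sigma_a^{b+1})$, so each connected component of $\Gamma$ is a path whose vertex set is $\{\sigma_a^1,\dots,\sigma_a^{r_a}\}$ for some $a$. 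Since a path on $r_a\geq 1$ vertices admits exactly two linear orderings, related by reversal, this determines each $\sigma_a$ up to the involution $\sigma_a\mapsto\overline{\sigma}_a$. Third, the order of $\sigma_1,\dots,\sigma_\ell$ within the assembly is prescribed by the condition $\min(U(\sigma_1))<\cdots<\min(U(\sigma_\ell))$, which is determined by the blocks already recovered. Altogether, $\sigma$ is determined by $B$ up to independent reversal of each $\sigma_a$, i.e., up to $\sim$.

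The main subtlety will be confirming that the component of $\Gamma$ corresponding to a single $\sigma_a$ is really a path rather than a more complicated connected graph. This reduces to checking, via \Cref{def:F}, that no pair $(j,k)$ with $j,k$ in non-consecutive blocks of the same $\sigma_a$ produces an entry $1$ in $B$, and that no cross-edges between different $\sigma_a$'s arise. Once this structural fact is pinned down, the two implications combine to show that $\overline{F}:\AOP(n,m)/\sim\,\to B(n,m)$ is well-defined and injective; surjectivity is inherited from \Cref{prop_sur}, yielding the desired bijection.
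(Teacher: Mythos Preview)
Your proposal is correct and follows essentially the same strategy as the paper's proof: both recover the set of blocks from the $-2$ entries of $B$, and then reconstruct each ordered partition $\sigma_a$ up to reversal from the pattern of $1$-entries, with the ordering of the $\sigma_a$'s forced by the condition on minima. The only cosmetic difference is that where you build a graph $\Gamma$ on blocks and observe its components are paths, the paper instead traces ``maximal paths'' $(c_1,\dots,c_r)$ of integers with $B_{c_i,c_{i+1}}=1$ and $B_{c_i,c_j}\neq -2$ for $i\neq j$---this is just choosing one representative per block along your paths in $\Gamma$.
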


\begin{proof}
  Recall from Proposition~\ref{prop_sur} that the map
  \(F \colon \AOP(n,m) \to B(n,m)\) is surjective. Hence, it suffices
  to prove that for \(\sigma ,\tau \in \AOP(n,m)\), we have
  \(F(\sigma) = F(\tau)\) if and only if \(\sigma \sim \tau\).

  Suppose that \(\sigma \sim \tau\). By the definition of \(\sim\),
  the neighborhood relations in \(\sigma\) and \(\tau\) are the same:
  for \(1 \leq k < j \leq m\), we have that \(j\) and \(k\) are
  neighbors in \(\sigma\) if and only if they are neighbors in
  \(\tau\). Moreover, \(k\) and \(j\) are in the same block in
  \(\sigma\) if and only if they are also in the same block in
  \(\tau\). Hence, by the definition of the map \( F \) in
  \Cref{def:F}, we have \(F(\sigma) = F(\tau)\).

  Suppose now that \( F(\sigma) = F(\tau) = [B_{j,k}]\). Considering
  the pairs \( (j,k) \) with \( B_{j,k} = -2 \), we obtain
  \[
    \{\sigma_1^1,\dots,\sigma_1^{r_1},\dots,\sigma_{\ell}^{1},\dots,\sigma_{\ell}^{r_{\ell}}\}
    = \{\tau_1^1,\dots,\tau_1^{r'_1},\dots,\tau_{\ell'}^1,\dots,\tau_{\ell'}^{r'_{\ell'}}\}.
  \]
  We call a sequence \( (c_1,\dots,c_r) \)
  of integers a \emph{path} if the following condition holds:
  \begin{enumerate}
  \item \( B_{c_i,c_j}\ne -2 \) for all \( i\ne j \),
  \item \( B_{c_i,c_{i+1}} = 1 \) for all \( i\in [r-1] \).
  \end{enumerate}
  A path is \emph{maximal} if it cannot be extended to a longer path.

  Let \( (c_1,\dots,c_r) \) be a maximal path containing \( 1 \).
  Since \( \min(\sigma_1) < \cdots < \min(\sigma_\ell) \), we have
  \( 1\in \sigma_1^1 \cup \cdots \cup \sigma_1^{r_1} \). Hence, by the
  construction of \( F(\sigma) = [B_{j,k}] \), we obtain \( r=r_1 \),
  and \( c_i\in \sigma_1^i \) for all \( i\in [r_1] \) or
  \( c_i\in \sigma_1^{r_1+1-i} \) for all \( i\in [r_1] \). Similarly,
  considering \( F(\tau) = [B_{j,k}] \), we obtain \( r=r'_1 \), and
  \( c_i\in \tau_1^i \) for all \( i\in [r'_1] \) or
  \( c_i\in \tau_1^{r'_1+1-i} \) for all \( i\in [r'_1] \). This shows
  that \( r_1=r'_1 \), and \( \sigma_1 = \tau_1 \) or
  \( \sigma_1 = \overline{\tau}_1 \). Now, let \( (d_1,\dots,d_s) \)
  be a maximal path containing
  \( \min ([m]\setminus (\sigma_1^1 \cup \cdots \cup \sigma_1^{r_1}))
  \). By the same argument, we obtain that \( r_2=r'_2 \), and
  \( \sigma_2 = \tau_2 \) or \( \sigma_2 = \overline{\tau}_2 \).
  Continuing in this way, we obtain \( \sigma \sim \tau \), which
  completes the proof.
\end{proof}

Since there is a bijection between
\( \{\B_{\bm i}: \bm i \in [n]^m\} \) and \( B(n,m) \), we obtain
\Cref{thm_Bott_matrices} immediately from
\Cref{prop_equivalence_Bnm_POPnm}. We now prove \Cref{cor_enumeration}
using the argument of the exponential formula
(see~\cite[Corollary~5.1.6]{Stanley_EC_2}).

\begin{proof}[Proof of \Cref{cor_enumeration}]
  By \Cref{thm_Bott_matrices}, we have
  \( b(n,m) = |\AOP(n,m)/\sim| \). Recall that \( \AOP(n,m) \) is the
  set of assemblies \( \sigma=(\sigma_1,\dots,\sigma_\ell) \) of
  ordered partitions
  \( \sigma_i = (\sigma_i^1,\dots,\sigma_i^{r_i}) \) such that
  \( \{\sigma_i^j: i\in [\ell], j\in [r_i]\} \) is a partition of \( [m] \) and
  \[
    (r_1+1)+\dots+(r_{\ell}+1) \leq n+1.
  \]
  Let \( \AOP'(n,m) \) be the set of such assemblies of ordered
  partitions satisfying
  \begin{equation}\label{eq:8}
    (r_1+1)+\dots+(r_{\ell}+1) = n+1,
  \end{equation}
  and let \( b'(n,m) = |\AOP'(n,m)/\sim| \). Observe that
  \( \AOP(n,m) = \bigcup_{i=1}^n \AOP'(i,m) \) holds, and for any
  \( \sigma,\tau\in \AOP(n,m) \), if \( \sigma\in \AOP'(i,m) \) and
  \( \sigma\sim \tau \), then \( \tau\in \AOP'(i,m) \). Thus, for
  any \( m\ge1 \), we have
  \[
    \sum_{n\ge1} b(n,m) y^n
    = \sum_{n\ge1} \left( \sum_{i=1}^n b'(i,m)  \right) y^n
    = \sum_{i\ge1} b'(i,m) y^i \sum_{n\ge i}  y^{n-i}
    = \frac{1}{1-y} \sum_{i\ge1} b'(i,m) y^i .
  \]
  Hence,  it suffices to prove that
  \begin{equation}\label{eq:10}
  \sum_{m\ge1} \left( \sum_{n\geq 1} b'(n,m) y^{n+1} \right) \frac{x^m}{m!} 
= \exp \left( \frac{y}{2}
\left( \frac{1}{1-y(e^x-1)} +y(e^x-1) -1 \right)\right) - 1.
\end{equation}

  For two ordered partitions \( \pi \) and \( \rho \), we define
  \( \pi\sim \rho \) if \( \pi = \rho \) or
  \( \pi = \overline{\rho} \). Let \( f(m,k) \) be the number of
  ordered partitions of \([m]\) with \(k\) blocks. Then, for any
  positive integer \( k \),
\begin{equation}\label{eq:6}
  \sum_{m \geq 1} f(m,k)\frac{x^m}{m!}
  = \sum_{m \geq 1} \left( \sum_{\substack{i_1,\dots,i_k\ge1\\ i_1 + \cdots + i_k=m}}
    \frac{m!}{i_1! \cdots i_k!}
  \right)\frac{x^m}{m!}
  =  (e^x -1)^k. 
\end{equation}
Let \(\tilde{f}(m,k)\) be the number of equivalence classes of ordered
partitions of \([m]\) with \(k\) blocks under the relation \(\sim\).
By definition, we have \( \tilde{f}(m,k) = f(m,k)/2 \) if \( k\ge2 \),
and \( \tilde{f}(m,k) = f(m,k) \) if \( k=1 \). Hence, by
\eqref{eq:6}, we have
\begin{align}
  \notag
\sum_{m,k \geq 1} \tilde{f}(m,k)\frac{x^m}{m!} y^{k+1} 
&= (e^x-1)y^2 + \sum_{k\geq 2} \frac{1}{2} (e^x-1)^k y^{k+1}  \\
  \notag
& = \frac{y}{2} \left(
\frac{1}{1-y(e^x-1)} - y(e^x-1)-1
\right) + y^2(e^x-1) \\
  \label{eq:9}
&= \frac{y}{2}
\left( \frac{1}{1-y(e^x-1)} +y(e^x-1) -1 \right). 
\end{align}

Now consider \( \sigma=(\sigma_1,\dots,\sigma_\ell)\in \AOP'(n,m) \).
Observe that the equivalence class \( [\sigma]\in \AOP'(n,m)/\sim \)
can be identified with the set
\( \{[\sigma_1],\dots,[\sigma_\ell]\} \) of the equivalence classes
\( [\sigma_i] \) of ordered partitions under the relation \( \sim \).
Such a set is constructed as follows.
\begin{enumerate}
\item Partition \( [m] \) into nonempty blocks \( A_1,\dots,A_\ell \)
  for some \( \ell\ge1 \), where
  \( \min(A_1) < \cdots < \min(A_\ell) \). The number of ways to do
  this is
  \[
   \sum_{\ell\ge1} \sum_{a_1 + \cdots + a_\ell = m} \frac{1}{\ell!} \binom{m}{a_1,\dots,a_\ell}.
  \]
\item For each \( i\in [\ell] \), create an equivalence class
  \( [\sigma_i] \) of ordered partitions, where \( \sigma_i \) is an
  ordered partition of \( A_i \) with \( r_i \) nonempty blocks, such
  that \eqref{eq:8} holds. The number of ways to do this is
  \[
    \sum_{\substack{r_1,\dots,r_\ell\ge1\\ (r_1+1) + \cdots + (r_\ell+1) = n+1}} \tilde{f}(a_1,r_1) \cdots \tilde{f}(a_\ell,r_\ell).
  \]
\end{enumerate}
This shows that for any \( m\ge1 \),
\[
  \sum_{n\geq 1} b'(n,m) y^{n+1}
  = \sum_{\ell\ge1} \sum_{a_1 + \cdots + a_\ell = m} \frac{1}{\ell!} \binom{m}{a_1,\dots,a_\ell}
 \prod_{i=1}^{\ell} \left( \sum_{r_i \ge1}  \tilde{f}(a_i,r_i) y^{r_i+1} \right).
\]
Therefore, by \eqref{eq:9}, we have
\begin{align*}
  \sum_{m\ge1} \left( \sum_{n\geq 1} b'(n,m) y^{n+1} \right) \frac{x^m}{m!} 
  &= \sum_{\ell\ge1}\frac{1}{\ell!}  \prod_{i=1}^\ell
 \left( \sum_{a_i,r_i\ge1} \tilde{f}(a_i,r_i) \frac{x^{a_i}}{a_i!} y^{r_i+1} \right)  \\
&= \exp \left( \frac{y}{2}
\left( \frac{1}{1-y(e^x-1)} +y(e^x-1) -1 \right)\right) - 1.
\end{align*}
Hence, we obtain \eqref{eq:10}, which completes the proof.
\end{proof}

In \Cref{tablebnm}, we display the first few terms of \(b(n,m)\). 
\begin{table}
\begin{tabular}{c|rrrrrrr}
\toprule 
\backslashbox{\(m\)}{\(n\)} & 1 & 2 & 3 & 4 & 5 & 6 & 7\\
\midrule 
1 & 1 & 1 & 1 & 1 & 1 & 1 & 1  \\
2 & 1 & 2 & 3 & 3 & 3 & 3 & 3\\
3 & 1 & 4 & 10 & 13 & 14 & 14 & 14\\
4 & 1 & 8 & 33 & 63 & 84 & 90 & 91 \\
\bottomrule 
\end{tabular}
\caption{The first few terms of the number \(b(n,m)\) of Bott matrices in \(B(n,m)\).}
\label{tablebnm}
\end{table}

\section{Isomorphism classes of Bott manifolds of Bott--Samelson type}\label{section_isomorphism}

In this section, we reinterpret the two operations on Bott matrices
introduced in~\cite{CLMP_unique}, called \op~and \oop, in terms of
assemblies of ordered partitions.
Using this interpretation, we prove Theorem~\ref{thm_isom}.

Let \( m>0 \) be a fixed integer, and let \(B\) be a Bott matrix of
size \(m \times m\), with column vectors
\(\mathbf{v}_1,\dots,\mathbf{v}_m\). For \(I \subseteq [m]\), consider
the \(m \times m\) lower triangular matrix \(L_I\) whose \(j\)th
column vector \(\mathbf{c}_j\) is defined by
\[
\mathbf{c}_j \colonequals \begin{cases}
\mathbf{e}_j & \text{ if }j \in I, \\
\mathbf{v}_j & \text{ if } j \notin I. 
\end{cases}
\]
By~\cite[Proposition~3.1]{CLMP_unique}, the matrix
\(B_I \colonequals L_I^{-1} L_{I^c}\) is again a Bott matrix. We call
the map that sends \( B \) to \( B_I \) for some \( I\subseteq [m] \)
the \emph{first operation} \(\op\).

For \( i\in [m-1] \), we denote by \(\dot{s}_i\) the permutation matrix of
the simple transposition \( s_i = (i,i+1) \in \mathfrak{S}_m\), that
is, the matrix whose \((s_i(j), j)\)-entry is \( 1 \) for
\(j=1,\dots,m\), and all other entries are \( 0 \). If \( \dot{s}_i B \dot{s}_i \)
is again a Bott matrix, then we call the map that sends \( B \) to
\( \dot{s}_i B \dot{s}_i \) for some \(i \in [m-1]\) the \emph{second operation}
\(\oop\).
One can study the isomorphism classes of Bott manifolds using these two operations: 
\begin{proposition}[{\cite[Proposition~3.4]{CLMP_unique}}]\label{prop_operations_and_isom}
  Let \(\B\) and \(\B'\) be Bott manifolds with the corresponding Bott
  matrices \(B\) and \(B'\), respectively. Then \(\B\) and \(\B'\) are
  isomorphic \textup{(}as toric varieties\textup{)} if and only if
  \(B\) can be obtained from \(B'\) by applying a sequence of the two
  operations \(\op\)~and \(\oop\).
\end{proposition}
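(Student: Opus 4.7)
The plan is to work at the level of fans and use the classical fact that two smooth projective toric varieties are isomorphic as toric varieties if and only if their fans are related by an element of \(\GL_m(\Z)\) (cf.~\cite[\S 3.3]{CLS11Toric}). Recall from \eqref{PreBottM} that the fan \(\Sigma\) of a Bott manifold \(\B\) has \(2m\) rays organized into \(m\) complementary pairs \(\{\mathbf{e}_j,\mathbf{v}_j\}\), and the maximal cones are exactly the \(2^m\) cones \(\Cone(\mathbf{u}_1,\dots,\mathbf{u}_m)\) with \(\mathbf{u}_j\in\{\mathbf{e}_j,\mathbf{v}_j\}\). An important observation is that this pairing is intrinsic to the combinatorial fan: two rays lie in the same pair if and only if no maximal cone contains them both.

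For the \((\Leftarrow)\) direction, I would verify that each operation is realized by a \(\GL_m(\Z)\)-transformation carrying \(\Sigma\) onto the fan of the resulting Bott manifold. For \(\oop\), the permutation matrix \(\dot{s}_i\) swaps the \(i\)-th and \((i+1)\)-st coordinate axes, and the hypothesis that \(\dot{s}_i B \dot{s}_i\) is again a Bott matrix guarantees that the image fan remains of Bott form. For \(\op\), the matrix \(L_I^{-1}\in\GL_m(\Z)\) sends the cone \(\Cone(\mathbf{c}_1,\dots,\mathbf{c}_m)\) to the positive octant, and a direct computation shows that, for each \(j\), the image \(L_I^{-1}\mathbf{d}_j\) (with \(\mathbf{d}_j=\mathbf{v}_j\) if \(j\in I\) and \(\mathbf{d}_j=\mathbf{e}_j\) otherwise) is the \(j\)-th column of \(L_I^{-1}L_{I^c}=B_I\); hence the entire fan is transported to the fan of \(\B_I\).

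For the \((\Rightarrow)\) direction, suppose \(\B\cong\B'\) via some \(A\in\GL_m(\Z)\) of fans. The preimage under \(A\) of the positive-octant cone of \(\B'\) is a maximal cone \(\Cone(\mathbf{u}_1,\dots,\mathbf{u}_m)\) of \(\Sigma\) with \(\mathbf{u}_j\in\{\mathbf{e}_j,\mathbf{v}_j\}\). Setting \(I=\{j:\mathbf{u}_j=\mathbf{e}_j\}\) and applying \(\op\) with index set \(I\), we may replace \(\B\) by \(\B_I\) and assume that \(A\) sends the positive octant of \(\B_I\) to that of \(\B'\). Consequently \(A\) permutes the standard basis vectors, so \(A(\mathbf{e}_j)=\mathbf{e}'_{\pi(j)}\) for some \(\pi\in\Sn\), giving \(A=\dot{\pi}\). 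Using the intrinsic pair structure from the first paragraph, \(A\) must send the pair \(\{\mathbf{e}_j,\mathbf{v}_j\}\) to \(\{\mathbf{e}'_{\pi(j)},\mathbf{v}'_{\pi(j)}\}\), and combined with the preceding identification we obtain \(A(\mathbf{v}_j)=\mathbf{v}'_{\pi(j)}\). This translates to the matrix identity \(B'=\dot{\pi}B_I\dot{\pi}^{-1}\).

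It remains to factor \(\pi\) as \(\pi=s_{i_k}\cdots s_{i_1}\) in such a way that each partial conjugate \(\dot{s}_{i_j}\cdots\dot{s}_{i_1}B_I\dot{s}_{i_1}\cdots\dot{s}_{i_j}\) remains a Bott matrix, and this is where the main obstacle lies. I would proceed by induction on the Coxeter length \(\ell(\pi)\). The condition that both \(B_I\) and \(\dot{\pi}B_I\dot{\pi}^{-1}\) are lower triangular with diagonal entries \(-1\) forces the vanishing of certain entries of \(B_I\); the inductive step amounts to showing that there always exists a simple transposition \(s_i\) such that \(\dot{s}_i B_I \dot{s}_i\) is still a Bott matrix and \(\ell(\pi s_i)<\ell(\pi)\). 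Specifically, among the inversions of \(\pi\), at least one must correspond to an admissible transposition—this should follow from a local analysis of the entries in rows and columns \(i,i+1\) of \(B_I\), combined with the global triangularity constraint imposed by \(\dot{\pi}B_I\dot{\pi}^{-1}\) being lower triangular. Once this combinatorial lemma is established, the induction closes and the desired factorization as a sequence of \(\op\) and \(\oop\) operations follows.
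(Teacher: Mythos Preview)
The paper does not actually prove this proposition; it is quoted from \cite{CLMP_unique}, and the paragraph following it only recalls the ``brief idea'' that \(\op\) corresponds to a choice of \(\Z\)-basis (a maximal cone) and \(\oop\) to a reordering of that basis. Your proposal is a correct and considerably more detailed fleshing-out of exactly this idea, so the approaches agree.

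That said, you are overestimating the difficulty of the final step. Once you have \(B'=\dot{\pi}B_I\dot{\pi}^{-1}\) with both sides lower triangular, the condition that \((\dot{\pi}B_I\dot{\pi}^{-1})_{a,b}=0\) for all \(a<b\) translates, after substituting \(j=\pi^{-1}(a)\), \(k=\pi^{-1}(b)\), into
\[
(B_I)_{j,k}=0\quad\text{whenever }k<j\text{ and }\pi(k)>\pi(j),
\]
i.e.\ the \((j,k)\)-entry of \(B_I\) vanishes for \emph{every} inversion \((k,j)\) of \(\pi\). In particular, for every descent \(i\) of \(\pi\) one has \((B_I)_{i+1,i}=0\), so \(\dot{s}_iB_I\dot{s}_i\) is automatically a Bott matrix. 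Thus no ``local analysis'' or special choice is needed: \emph{any} reduced expression \(\pi=s_{i_k}\cdots s_{i_1}\) works, since at each stage the remaining permutation still conjugates the current Bott matrix to \(B'\), and hence every one of its descents remains admissible. The induction on \(\ell(\pi)\) is therefore immediate, and your proof is complete once you make this observation.
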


We recall a brief idea of the proof of
Proposition~\ref{prop_operations_and_isom} for the reader's
convenience. Note that the fan of a Bott manifold of dimension \(m\)
is combinatorially equivalent to the normal fan of a cube \([0,1]^m\),
and each pair \(\{\mathbf{e}_j, \mathbf{v}_j\}\) of ray generators
corresponds to two rays that do not form a cone (cf.
Section~\ref{section_Bott_manifolds_of_BS_type}). Since a Bott
manifold is smooth, the set of ray generators forming a maximal cone
constitutes a \(\Z\)-basis of the lattice. The first operation~\(\op\)
corresponds to the choice of a \(\Z\)-basis, and the second
operation~\(\oop\) corresponds to the choice of ordering of that
basis.

We will consider operations on the set of Bott matrices of BS type to study the isomorphism classes. 

\begin{example}\label{example_op1}
  Let \( m=3 \) and \(\bm i = (3,1,2)\). Then
  the matrix \(B = B_{\bm i}\) is given by 
	\[
	B =  \begin{bmatrix}
		-1 & 0 & 0 \\
		0 & -1 & 0 \\
		1 & 1 & -1 
	\end{bmatrix}.
	\]
	If \(I =\emptyset\), we have
        \(L_{I} = L_{\emptyset} = B \) and
        \(L_{I^c} = L_{\{1,2,3\}} = [\mathbf{e}_1 \ \mathbf{e}_2 \
        \mathbf{e}_3]\). Accordingly, we obtain
	\[
	B_{\emptyset} = B^{-1} = \begin{bmatrix}
		-1 & 0 & 0 \\
		0 & -1 & 0 \\
		-1 & -1 & -1
	\end{bmatrix}.
	\]
	Considering all subsets \( I \) of \([3]\), we obtain the
        following table.
	\begin{center}
		\begin{tabular}{c|cccccccc}
			\toprule 
			\(I\) & \(\emptyset\) & \(\{1\}\) & \(\{2\}\) & \(\{3\}\) & \(\{1,2\}\) & \(\{1,3\}\) & \(\{2,3\}\) & \(\{1,2,3\}\)\\
			\midrule 
			\(L_I^{-1} L_{I^c}\) & \(B^{-1}\) & \(B^{-1}\) & \(B^{-1}\)  
			& \(B\) & \(B^{-1}\) & \(B\) & \(B\) & \(B\) \\
			\bottomrule 
		\end{tabular}
	\end{center}
\end{example}

Observe that in Example~\ref{example_op1}, the subsets
\(\emptyset, \{1\}, \{2\}\), and \( \{1,2\} \) produce the Bott matrix
\( B^{-1} \), which is \emph{not} contained in \(B(n,m)\) since the
\((3,1)\)-entry is \(-1\). On the other hand, the other subsets
provide the same matrix as \(B\). Hence, in this example, the map
\( B\mapsto B_I \) produces a matrix in \(B(n,m)\) if and only if
\(3\in I\). Moreover, \(\{3\}\) is the set of row indices with nonzero
entries below the main diagonal of \( B \). The following lemma shows
that this phenomenon holds in general.

\begin{lemma}\label{lemma_op1_not_in_B}
  Let \(B = [b_{j,k}] \in B(n,m)\), and let
  \[
    J_B \colonequals \{j \in [m] \mid \text{there exists \(k < j\) such that }b_{j,k} \neq 0\}.
  \]
  \begin{enumerate}
  \item If \(J_B \not\subseteq I\), we have \( B_I \notin B(n,m) \).
  \item If \(J_B \subseteq I\), we have \( B_I = B \in B(n,m) \). 
  \end{enumerate}
\end{lemma}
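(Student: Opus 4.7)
The plan is to unpack the matrix identity $L_I B_I = L_{I^c}$ column by column. The central observation is that, for a lower triangular Bott matrix $B = [b_{j,k}]$, the expansion $\mathbf{v}_j = -\mathbf{e}_j + \sum_{k > j,\, b_{k,j} \ne 0} b_{k,j}\mathbf{e}_k$ only involves $\mathbf{e}_k$ for $k \in J_B$, so the hypothesis $J_B \subseteq I$ (or its failure) governs how $L_I$ acts on the columns of $B$.

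For part (2), I would verify $L_I \mathbf{v}_j = (L_{I^c})_j$ for every $j$, which is equivalent to $L_I B = L_{I^c}$, and hence to $B_I = B$. For any $k > j$ with $b_{k,j} \ne 0$, we have $k \in J_B \subseteq I$, so $L_I \mathbf{e}_k = \mathbf{e}_k$. Splitting on whether $j \in I$ (so $L_I \mathbf{e}_j = \mathbf{e}_j$) or $j \in I^c$ (so $L_I \mathbf{e}_j = \mathbf{v}_j$), a short cancellation yields $L_I \mathbf{v}_j = \mathbf{v}_j$ in the first case and $L_I \mathbf{v}_j = \mathbf{e}_j$ in the second, matching $(L_{I^c})_j$ in either case.

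For part (1), I would pick $j_0$ to be the minimal element of $J_B \cap I^c$ and choose $k_0 < j_0$ with $b_{j_0, k_0} \ne 0$. By minimality, $J_B \cap \{1, \dots, j_0 - 1\} \subseteq I$, so applying part (2) to the top-left $(j_0 - 1) \times (j_0 - 1)$ block of $B$ together with $I \cap \{1,\dots,j_0-1\}$ gives $(B_I)_{\ell, k} = b_{\ell, k}$ for all $\ell, k < j_0$. With this in hand, I extract the $(j_0, k_0)$-entry of both sides of $L_I B_I = L_{I^c}$. The only surviving terms in $\sum_{\ell}(L_I)_{j_0, \ell}(B_I)_{\ell, k_0}$ are $\ell = j_0$ (contributing $-(B_I)_{j_0, k_0}$, since $j_0 \in I^c$) and, if $k_0 \in I^c$, the term $\ell = k_0$ contributing $-b_{j_0, k_0}$; any intermediate $\ell$ with $k_0 < \ell < j_0$ and $\ell \in I^c$ would require $b_{\ell, k_0} \ne 0$, placing $\ell \in J_B \cap I^c$ with $\ell < j_0$ and contradicting minimality. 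Matching against $(L_{I^c})_{j_0, k_0}$, which equals $b_{j_0, k_0}$ if $k_0 \in I$ and $0$ if $k_0 \in I^c$, yields in either subcase that $(B_I)_{j_0, k_0} = -b_{j_0, k_0} \in \{-1, 2\}$, which is not a legal entry of an element of $B(n,m)$.

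The main obstacle is the bookkeeping step that reduces part (1) to an application of part (2) on the top-left block, namely verifying that restricting $L_I$, $B_I$, and $L_{I^c}$ to indices $\{1,\dots,j_0-1\}$ produces the analogous triple for the restricted Bott matrix and restricted subset. Once this reduction is clean, the rest is an explicit expansion of one matrix entry, where the minimality of $j_0$ kills every intermediate cross term.
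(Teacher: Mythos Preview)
Your proof is correct. Both you and the paper identify the same critical entry $(j_0,k_0)$ with $j_0=\min(J_B\setminus I)$ and show it equals $-b_{j_0,k_0}$, but the executions differ. For part~(2), the paper first establishes $L_I^2=E_m$ and then computes $L_I L_{I^c}=B$, whereas you verify $L_I B=L_{I^c}$ in one pass, which is slightly more direct. For part~(1), the paper works with the row vectors $\mathbf{r}_p$ of $L_I^{-1}$ and a basis expansion of $\mathbf{e}_q+\mathbf{v}_q$ in the columns of $L_I$, never invoking part~(2). Your route instead bootstraps from part~(2): because inverses and products of lower triangular matrices restrict cleanly to top-left blocks, the minimality of $j_0$ lets you apply part~(2) to the $(j_0-1)\times(j_0-1)$ corner and read off $(B_I)_{\ell,k}=b_{\ell,k}$ there, after which one row of $L_I B_I=L_{I^c}$ finishes the computation. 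Your approach avoids handling $L_I^{-1}$ explicitly and makes the role of minimality more transparent; the paper's approach keeps part~(1) self-contained and sidesteps the block-restriction bookkeeping you flag as the main obstacle. That bookkeeping is indeed routine: the top-left block of $B=B_{\bm i}$ is $B_{(i_1,\dots,i_{j_0-1})}\in B(n,j_0-1)$, and its $J$-set is $J_B\cap[j_0-1]\subseteq I\cap[j_0-1]$.
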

\begin{proof}
  For the first statement, suppose that \(J_B \not\subseteq I\). Let
  \(p = \min (J_B \setminus I)\) and
  \(q = \min \{ k \in [m] : b_{p,k} \neq 0\}\). Note that since
  \( p\in J_B \), we have \( q<p \). We claim that the \((p,q)\)-entry
  of \(B_I\) is \(-b_{p,q}\). This implies \( B_I \notin B(n,m) \)
  because its entry \(-b_{p,q}\), which is either \(2\) or \(-1\), is
  below the diagonal and is not an element of \( \{0,1,-2\} \). Hence,
  it suffices to prove the claim.
  
  Let \(\mathbf{v}_1,\dots,\mathbf{v}_m\) be the column vectors of
  \( B \), and let \(\mathbf{r}_1,\dots,\mathbf{r}_m\) be the row
  vectors of \(L_I^{-1}\). Then the \((p,q)\)-entry of
  \(B_I = L_I^{-1} L_{I^c}\) is
  \begin{equation}\label{eq:2}
    (B_I)_{p,q} = \begin{cases}
      \mathbf{r}_{p} \cdot \mathbf{v}_q & \text{ if }q \in I, \\
      \mathbf{r}_{p} \cdot \mathbf{e}_q & \text{ if } q \notin I. 
    \end{cases}
  \end{equation}
  Since \(L_I^{-1}L_I \) is the identity matrix, for every \( t\in [m] \), we have
  \begin{equation}\label{eq:3}
    \delta_{p,t} = (L_I^{-1}L_I)_{p,t} = 
    \begin{cases}
      \mathbf{r}_{p} \cdot \mathbf{e}_t  & \text{ if }t \in I,\\
      \mathbf{r}_{p} \cdot \mathbf{v}_t  & \text{ if }t \notin I.
    \end{cases}
  \end{equation}
  By adding \eqref{eq:2} and \eqref{eq:3} with \( t=q \), we obtain
  \begin{equation}\label{eq:1}
    (B_I)_{p,q} =  \mathbf{r}_p \cdot (\mathbf{e}_q+ \mathbf{v}_q).
  \end{equation}
  Since the column vectors of \(L_I\) form a basis of \(\R^m\), there
  exist \(c_i,d_i\in \R\) such that
  \begin{equation}\label{eq:4}
    \mathbf{e}_q + \mathbf{v}_q = 
    \sum_{i \notin I} c_i \mathbf{v}_i 
    + \sum_{i \in I} d_i \mathbf{e}_i.
  \end{equation}

  By \eqref{eq:3} and the fact that \( p\notin I \), taking the dot
  product with \( \mathbf{r}_p \) on both sides of \eqref{eq:4} gives
  \begin{equation}\label{eq:cp}
      \mathbf{r}_p \cdot (\mathbf{e}_q+ \mathbf{v}_q) = c_p.
  \end{equation}
  On the other hand, since \( q<p \) and \( p\not\in I \),
  taking the dot product with \( \mathbf{e}_p \) on
  both sides of \eqref{eq:4} gives
  \begin{equation}\label{eq:5}
    \mathbf{e}_p \cdot \mathbf{v}_q
    = \sum_{i \in I^c} c_i \mathbf{e}_p \cdot \mathbf{v}_i 
    = \sum_{i \in I^c\cap J_B} c_i \mathbf{e}_p \cdot \mathbf{v}_i
    + \sum_{i \in I^c \setminus J_B} c_i \mathbf{e}_p \cdot \mathbf{v}_i.
  \end{equation}
  Note that \( \mathbf{e}_j \cdot \mathbf{v}_k = b_{j,k} \). If
  \( i \in I^c\cap J_B = J_B \setminus I \), then
  \( i\ge p = \min (J_B \setminus I) \). Since \( B \) is lower
  triangular,
  \( \mathbf{e}_p\cdot \mathbf{v}_i = b_{p,i} = -\delta_{i,p} \). If
  \( i\in I^c \setminus J_B \), then since \( i\notin J_B \), we have
  \( b_{j,i} = 0 \) for all \( j\ne i \). Since \( i\ne p \), we have
  \( \mathbf{e}_p\cdot \mathbf{v}_i = b_{p,i} = 0 \). Thus, we obtain
  from \eqref{eq:5} that
  \begin{equation}\label{eq:-cp}
    b_{p,q} = \mathbf{e}_p\cdot \mathbf{v}_q = - c_p.
  \end{equation}
  Combining \eqref{eq:1}, \eqref{eq:cp}, and \eqref{eq:-cp}, we obtain
  \( (B_{I})_{p,q} = -b_{p,q} \), as claimed.

  For the second statement, suppose \(J_B \subseteq I\). We claim that
  \( L_I^2 = E_m\). Let \(\mathbf{u}_1,\dots,\mathbf{u}_m\) be the
  column vectors of \(L_I^2\). By the definition of \(L_I\), we have
  \( \mathbf{u}_k = \mathbf{e}_k \) for all \( k\in I \). Suppose that
  \(k \notin I\). Note that for two \( m\times m \) matrices \( A \)
  and \( B \), the \( k \)th column of \( AB \) is the linear
  combination of the columns of \( A \) with coefficients given by the
  \(k\)th column vector of \( B \). Therefore, we have
  \[
    \mathbf{u}_k =
    \sum_{j \in I}(\mathbf{e}_j \cdot \mathbf{v}_k) \mathbf{e}_j + \sum_{j \notin I} (\mathbf{e}_j \cdot \mathbf{v}_k) \mathbf{v}_j.
  \]
  Since \(J_B \subseteq I\), we have
  \( \mathbf{e}_j\cdot \mathbf{v}_k = b_{j,k} = - \delta_{j,k}\) for
  all \(j \notin I \). Hence, by the above equation, we obtain
  \[
    \mathbf{u}_k =
    \sum_{j \neq k} (\mathbf{e}_j \cdot \mathbf{v}_k) \mathbf{e}_j - \mathbf{v}_k 
    = (\mathbf{v}_k + \mathbf{e}_k) - \mathbf{v}_k = \mathbf{e}_k.
  \]
  Thus, \( \mathbf{u}_k = \mathbf{e}_k \) for all \( k\in [m] \), and
  we obtain \( L_I^2 = E_m \), as claimed. Equivalently,
  \( L_I^{-1} = L_I \).

  Now let \( \mathbf{w}_1,\dots,\mathbf{w}_m \) be the columns of
  \(B_I = L_I^{-1} L_{I^c} \). By the claim, we have
  \( L_I L_{I^c} = [ \mathbf{w}_1 \ \cdots \ \mathbf{w}_m ] \). If
  \( k\notin I \), the \( k \)th columns of \( L_I \) and
  \( L_{I^c} \) are \( \mathbf{v}_k \) and \( \mathbf{e}_k \),
  respectively; hence \( \mathbf{w}_k = \mathbf{v}_k \). Suppose that
  \(k \in I\). Then, by the same arguments above, we obtain
  \[
   \mathbf{w}_k = \sum_{j \in I}(\mathbf{e}_j \cdot \mathbf{v}_k) \mathbf{e}_j + \sum_{j \notin I}(\mathbf{e}_j \cdot \mathbf{v}_k)\mathbf{v}_j = \mathbf{v}_k.
  \]
  Therefore, \( \mathbf{w}_k = \mathbf{v}_k \) for all \( k\in [m] \),
  and we obtain the second statement \( B_I = B \).
\end{proof}

By Lemma~\ref{lemma_op1_not_in_B}, we obtain the following proposition. 
\begin{proposition}\label{prop_op1_on_BS}
	Let \(B \in B(n,m)\) be a Bott matrix of BS type. For a subset \(I \subseteq [m]\), after applying the first operation \op, we have \(B_I = B\) or \(B_I \notin B(n,m)\). 
\end{proposition}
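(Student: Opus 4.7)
The plan is to observe that this proposition is an immediate case analysis based on \Cref{lemma_op1_not_in_B}. Given any \( B \in B(n,m) \) and any \( I\subseteq [m] \), the set \( J_B \) is determined by \( B \), and exactly one of the two situations \( J_B \subseteq I \) or \( J_B \not\subseteq I \) occurs.

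First, I would fix \( B = [b_{j,k}] \in B(n,m) \) and \( I \subseteq [m] \), and simply split into these two exhaustive cases. If \( J_B \subseteq I \), then by part~(2) of \Cref{lemma_op1_not_in_B} we have \( B_I = B \), and in particular \( B_I \in B(n,m) \). If \( J_B \not\subseteq I \), then by part~(1) of \Cref{lemma_op1_not_in_B} we have \( B_I \notin B(n,m) \). In either case, the desired conclusion \( B_I = B \) or \( B_I \notin B(n,m) \) holds.

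There is no real obstacle here, since all the content has been absorbed into \Cref{lemma_op1_not_in_B}; the proposition is essentially a restatement of that lemma in the form needed for the subsequent discussion of isomorphism classes. The only thing to double-check is that the two cases in the lemma cover every subset \( I \), which is clear because \( J_B \subseteq I \) and \( J_B \not\subseteq I \) are complementary conditions. Thus, the proof is a one-line case split invoking the two parts of \Cref{lemma_op1_not_in_B}.
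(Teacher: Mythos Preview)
Your proposal is correct and matches the paper's approach exactly: the paper simply states that the proposition follows from \Cref{lemma_op1_not_in_B}, and your explicit two-case split on whether \(J_B \subseteq I\) is precisely the intended reading of that citation.
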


Proposition~\ref{prop_op1_on_BS} tells us that the first operation
acts~{\op} on the set of Bott matrices of BS type trivially. To
understand the second operation~{\oop}, we introduce some definitions.

\begin{definition}\label{def_admissible}
  Let \(\sigma \in \AOP(n,m)\). For a simple transposition
  \(s_j = (j,j+1) \in \mathfrak{S}_m\), we denote by
  \(s_j \cdot \sigma\) the assembly of an ordered partition obtained
  from \(\sigma\) by swapping the two integers \(j\) and \(j+1\). We
  say that \(s_j\) is an \emph{admissible} transposition of \(\sigma\)
  if \(j\) and \(j+1\) are not neighbors in \(\sigma\).
\end{definition}

\begin{definition}
  For \( \sigma,\tau\in \AOP(n,m)\), we define \(\sigma \approx \tau\)
  if there exist
  \( \rho^{(0)},\rho^{(1)},\dots,\rho^{(k)}\in \AOP(n,m) \) such that
  \( \rho^{(0)}\sim \sigma \), \( \rho^{(k)} \sim \tau \), and for
  each \( i\in [k] \), there is an admissible transposition
  \( s_{j_i} \) of \( \rho^{(i-1)} \) such that
  \( s_{j_i}\cdot \rho^{(i-1)} = \rho^{(i)} \).
\end{definition}

For example, we have
\( ( (1 \, 2), (3)) \approx ((1 \, 3), (2)) \approx ((2 \, 3), (1))
\).
Recall the map \( F \colon \AOP(n,m)\to B(n,m) \) given in
\Cref{def:F}. The following proposition describes the case where 
the second operation \oop~ can be applied to \(F(\sigma)\).

\begin{proposition}\label{prop_op2_on_BS}
  Let \(\sigma \in \AOP(n,m)\) and \( i\in [m-1] \). Then
  \( \dot{s}_i F(\sigma) \dot{s}_i \in B(n,m)\) if and only if \(s_i\) is an
  admissible transposition of~\(\sigma\).
\end{proposition}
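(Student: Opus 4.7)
The approach is to analyze $M := \dot{s}_i F(\sigma) \dot{s}_i$ entry-by-entry using the identity $M_{j,k} = F(\sigma)_{s_i(j), s_i(k)}$. Conjugation by $\dot{s}_i$ simultaneously swaps rows $i, i+1$ and columns $i, i+1$, so diagonal entries remain all $-1$, and below-diagonal entries not involving indices $i$ or $i+1$ are simply permuted within the lower triangle. The only entry that can potentially destroy lower triangularity is $M_{i,i+1} = F(\sigma)_{i+1,i}$, so membership in $B(n,m)$ hinges on whether this entry vanishes.

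The ``only if'' direction is then immediate: if $s_i$ is not admissible, so $i$ and $i+1$ are neighbors in $\sigma$, then Definition~\ref{def:F} forces $F(\sigma)_{i+1, i} \in \{-2, 1\}$. Hence $M_{i, i+1} \neq 0$, which means $M$ is not even lower triangular, let alone a Bott matrix of BS type, so $M \notin B(n,m)$.

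For the ``if'' direction, with $s_i$ admissible we have $F(\sigma)_{i+1, i} = 0$, so $M$ is lower triangular with diagonal $-1$. To show $M \in B(n,m)$, I would prove the stronger statement $M = F(s_i \cdot \sigma)$ and then invoke Proposition~\ref{prop_sur}. The equality is essentially bookkeeping: since $s_i \cdot \sigma$ is obtained from $\sigma$ by interchanging the labels $i$ and $i+1$, the element $k$ in $s_i \cdot \sigma$ plays the role of $s_i(k)$ in $\sigma$. Consequently, ``$j$ and $k$ are neighbors (resp.\ in the same block) in $s_i \cdot \sigma$'' is equivalent to ``$s_i(j)$ and $s_i(k)$ are neighbors (resp.\ in the same block) in $\sigma$''. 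Comparing Definition~\ref{def:F} termwise for $F(s_i \cdot \sigma)$ and $F(\sigma)$ then gives $F(s_i \cdot \sigma)_{j, k} = F(\sigma)_{s_i(j), s_i(k)} = M_{j, k}$ for all $j > k$. Well-definedness of $s_i \cdot \sigma$ as an element of $\AOP(n,m)$ is not an obstacle, since the relabeling preserves the underlying partition of $[m]$ and the number of blocks in each ordered partition; after reindexing the constituent ordered partitions by increasing minima the bound $n$ is unchanged.

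The only conceptually substantive step — and the main obstacle to overcome — is recognizing that admissibility, defined purely in combinatorial terms through the neighbor relation of Definition~\ref{def_neighbors}, is precisely the condition that lets the matrix operation \oop{} correspond to the combinatorial operation $\sigma \mapsto s_i \cdot \sigma$. Once this correspondence is set up, everything reduces to verifying the entries against Definition~\ref{def:F}.
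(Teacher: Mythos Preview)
Your proof is correct and follows the same core strategy as the paper: both arguments reduce the question to whether the single entry $F(\sigma)_{i+1,i}$ vanishes, and both identify this via Definition~\ref{def:F} with the neighbor condition. The paper's proof is very terse on the ``if'' direction, simply asserting that $b_{i+1,i}=0$ suffices for $\dot{s}_i F(\sigma)\dot{s}_i\in B(n,m)$; your version makes this step explicit by proving the identity $\dot{s}_i F(\sigma)\dot{s}_i = F(s_i\cdot\sigma)$, which is a cleaner justification and has the bonus of recording a fact used implicitly later in the paper.
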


\begin{proof} 
  Let \(F(\sigma) = [b_{j,k}]\). Since
  \( \dot{s}_i F(\sigma) \dot{s}_i \in B(n,m)\) is the matrix obtained from
  \( [b_{j,k}] \) by swapping the \( i \)th row and the \( (i+1) \)st
  row and swapping the \( i \)th column and the \( (i+1) \)st column,
  we have \( \dot{s}_i F(\sigma) \dot{s}_i \in B(n,m)\) if and only if
  \(b_{i+1,i} = 0\). By the definition of the map \( F \), we have
  \(b_{i+1,i} = 0\) if and only if \(i\) and \(i+1\) are not
  neighbors. This proves the proposition.
\end{proof}

We are now ready to prove Theorem~\ref{thm_isom}, which states that
there is a bijection from \( \AOP(n,m)/\approx \) to
\( \{\B_{\bm i} : \bm i \in [n]^m \}/(\text{isom}) \).

\begin{proof}[Proof of Theorem~\ref{thm_isom}]
  For a Bott matrix \( B\in B(n,m) \), let \( \B(B) \) denote the Bott
  manifold corresponding to \( B \). Define
  \( \phi: (\AOP(n,m)/\approx) \to \{\B_{\bm i} : \bm i \in [n]^m
  \}/(\text{isom}) \) by \( \phi([\sigma]) = [\B(F(\sigma))] \), where
  \( [\sigma] = \{\tau\in \AOP(n,m): \tau \approx \sigma\}\), and
  \( [\B(F(\sigma))] \) is the isomorphism class containing
  \( \B(F(\sigma)) \). By \Cref{prop_op1_on_BS,prop_op2_on_BS}, the
  map \( \phi \) is well defined. By Proposition~\ref{prop_sur}, the
  map \(F \colon \AOP(n,m)\to B(n,m)\) is surjective, and hence,
  \( \phi \) is also surjective.

  Suppose that \( \phi([\sigma]) = \phi([\tau]) \). Then the Bott
  manifolds \( \B(F(\sigma)) \) and \( \B(F(\tau)) \) are isomorphic.
  Thus, by Proposition~\ref{prop_operations_and_isom}, the Bott matrix
  \( F(\sigma) \) is obtained from \( F(\tau) \) by applying a
  sequence of the operations \op~ and \oop. By
  \Cref{prop_op1_on_BS,prop_op2_on_BS}, this implies
  \( \sigma \approx \tau \), that is \( [\sigma]=[\tau] \). Therefore,
  \( \phi \) is injective, which completes the proof.
\end{proof}

Now we introduce the notion of the indecomposability of Bott
manifolds. We note that in some literature, indecomposability has
different meanings (cf.~\cite{ch-ma-ou17}).

\begin{definition}
  We say that a Bott manifold is \emph{indecomposable} if it is not
  isomorphic to a product of lower-dimensional Bott manifolds.
  Otherwise, we say that it is \emph{decomposable}.
  Similarly, we say that a Bott matrix is \emph{indecomposable} or
  \emph{decomposable} if the corresponding Bott manifold is
  indecomposable or decomposable, respectively.
\end{definition}

Recall from~\cite[Proposition~3.1.14]{CLS11Toric} that one can describe the fan of a product of two toric varieties using those of toric varieties. Let \(\Sigma_1  \subseteq (N_1)_{\R}\) and \(\Sigma_2 \subseteq (N_2)_{\R}\) be fans, where \(N_1\) and \(N_2\) are lattices. Then 
  \[
    \Sigma_1 \times \Sigma_2 = \{\sigma_1 \times \sigma_2 : \sigma_i \in \Sigma_i\}
  \] 
  is a fan in \((N_1)_{\R} \times (N_2)_{\R} = (N_1 \times N_2)_{\R}\) and 
  \[
    X_{\Sigma_1 \times \Sigma_2} \cong X_{\Sigma_1} \times X_{\Sigma_2}. 
  \]
Considering Bott manifolds, we can see that a Bott manifold
\(\B(B)\) is decomposable if and only if after applying a
certain sequence of two operations \op~and \oop, the Bott
matrix \(B\) can be expressed by a block diagonal matrix whose
diagonal blocks are lower-dimensional Bott matrices.
Hence, we have the following lemma.

\begin{lemma}\label{lemma_decomposable_Bott_matrix}
  A Bott matrix \(B\) of size \( m\times m \) is decomposable if and
  only if there is a sequence of operations \op~and \oop~whose
  application to \( B \) yields a block diagonal matrix whose diagonal
  blocks are Bott matrices of size less than \( m\times m \).
\end{lemma}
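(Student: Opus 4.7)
The plan is to deduce both directions from Proposition~\ref{prop_operations_and_isom} once we understand how the fan of a block diagonal Bott matrix decomposes. The key intermediate statement I would establish first is the following: if \(B_1\) is a Bott matrix of size \(k\times k\) and \(B_2\) is a Bott matrix of size \((m-k)\times(m-k)\), then the \(m\times m\) block diagonal matrix \(B'\) with diagonal blocks \(B_1,B_2\) is itself a Bott matrix, and the corresponding Bott manifold satisfies \(\B(B')\cong \B(B_1)\times \B(B_2)\) as toric varieties.

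To prove this product decomposition, I would use the explicit description of the fan of \(\B(B')\) in Equation~\eqref{PreBottM}. Since \(B'\) is block diagonal, the rays \(\mathbf{e}_1,\dots,\mathbf{e}_k,\mathbf{v}_1,\dots,\mathbf{v}_k\) lie in the sublattice \(\Z^k\times\{0\}\subset \Z^m\), while \(\mathbf{e}_{k+1},\dots,\mathbf{e}_m,\mathbf{v}_{k+1},\dots,\mathbf{v}_m\) lie in \(\{0\}\times\Z^{m-k}\). The maximal cones of \(\B(B')\) are precisely the \(m\)-element subsets \(S\) of the ray set such that \(\{\mathbf{e}_i,\mathbf{v}_i\}\not\subseteq S\) for every \(i\in[m]\); because this avoidance condition decouples across the index ranges \([1,k]\) and \([k+1,m]\), each maximal cone of \(\B(B')\) is the product of a maximal cone of the fan of \(\B(B_1)\) with a maximal cone of the fan of \(\B(B_2)\). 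Hence the fan of \(\B(B')\) is the product fan, and \(\B(B')\cong \B(B_1)\times \B(B_2)\) as toric varieties by the product-of-fans principle (\cite[Proposition~3.1.14]{CLS11Toric}).

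With this in hand, the backward direction is immediate: if a sequence of operations \op~and \oop~transforms \(B\) into a block diagonal matrix \(B'\) whose diagonal blocks \(B_1,B_2\) have size less than \(m\times m\), then by Proposition~\ref{prop_operations_and_isom} we have \(\B(B)\cong \B(B')\cong \B(B_1)\times \B(B_2)\), so \(\B(B)\) is decomposable. For the forward direction, if \(\B(B)\) is decomposable, write \(\B(B)\cong \mathcal{B}_1\times \mathcal{B}_2\) with each \(\mathcal{B}_i\) of dimension less than \(m\). Pick any Bott matrices \(B_1\) and \(B_2\) for \(\mathcal{B}_1\) and \(\mathcal{B}_2\); then the block diagonal matrix \(B'\) with blocks \(B_1,B_2\) is a valid Bott matrix (lower triangular with \(-1\) on the diagonal), and the intermediate statement gives \(\B(B')\cong \mathcal{B}_1\times \mathcal{B}_2\cong \B(B)\). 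Applying Proposition~\ref{prop_operations_and_isom} in the reverse direction yields a sequence of operations \op~and \oop~carrying \(B\) to \(B'\), as required.

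I do not anticipate a serious obstacle: the argument combines Proposition~\ref{prop_operations_and_isom} with the combinatorial description of maximal cones from Section~\ref{section_Bott_manifolds_of_BS_type}. The only point that requires some care is verifying that the product of the fans of \(\B(B_1)\) and \(\B(B_2)\) coincides on the nose with the fan of \(\B(B')\), including the identification of the ambient lattice \(\Z^k\times \Z^{m-k}\cong \Z^m\), but this is forced by the block diagonal shape of \(B'\) and introduces no genuine difficulty.
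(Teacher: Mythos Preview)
Your proposal is correct and follows essentially the same approach as the paper: the paper's justification is the short paragraph preceding the lemma, which invokes the product-of-fans description from \cite[Proposition~3.1.14]{CLS11Toric} and then appeals (implicitly) to Proposition~\ref{prop_operations_and_isom}, exactly as you do. Your write-up simply fills in the details the paper leaves to the reader, in particular the explicit verification that the fan of a block diagonal Bott matrix is the product fan.
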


Let \( \sigma=(\sigma_1,\dots,\sigma_\ell)\in \AOP(n,m) \). Then each
\( \sigma_a \) is an ordered partition whose underlying partition is a
partition of \( \{b+1,\dots,b+r\} \) for some \( b,r\ge1 \). Let
\( \sigma_a - b \) denote the ordered partition obtained from
\( \sigma_a \) by replacing each element \( i \) by \( i-b \). For
simplicity, we write \( (\sigma_a) \) to mean the assembly
\( (\sigma_a-b) \) of ordered partitions consisting of a single
ordered partition \( \sigma_a - b \). We also write \( F(\sigma_a) \)
in place of \( F((\sigma_a)) \).

\begin{proposition}\label{prop_indecomposable}
  Let \(\bm i \in [n]^m\) and
  \(\alpha(\bm i) = (\sigma_1,\dots,\sigma_{\ell})\). Then we have
  \[
    \B_{\bm i} \cong \B(F(\sigma_1)) \times \dots \times \B(F(\sigma_{\ell})).
  \]
  Moreover, each \(\B(F(\sigma_{a}))\) is indecomposable.
\end{proposition}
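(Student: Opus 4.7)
The plan has two parts: establishing the product decomposition, then the indecomposability of each factor.

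For the product decomposition, I will show that \(B_{\bm i}\) can be brought into block diagonal form using only admissible transpositions (the \(\oop\) operations). By \Cref{def:alpha}, if \(j\in U(\sigma_a)\) and \(k\in U(\sigma_b)\) with \(a\ne b\), then \(|i_j-i_k|\ge 2\); by \Cref{def_Bott_manifold_of_BS_type} this makes the entry \(b_{j,k}\) of \(B_{\bm i}\) zero, so \(j\) and \(k\) are non-neighbors in \(\alpha(\bm i)\). \Cref{prop_op2_on_BS} then guarantees that any swap of consecutive indices drawn from different \(U(\sigma_a)\)'s is an admissible transposition. A bubble-sort-style sequence of such swaps moves the positions of \(U(\sigma_1)\) to \(\{1,\dots,|U(\sigma_1)|\}\), then those of \(U(\sigma_2)\) to the next block of consecutive positions, and so on. The resulting matrix is block diagonal with blocks \(F(\sigma_1),\dots,F(\sigma_\ell)\) (after relabeling each block of positions as \(1,2,\dots\)), and the fan product decomposition recalled just before \Cref{lemma_decomposable_Bott_matrix}, combined with \Cref{prop_operations_and_isom}, then yields \(\B_{\bm i}\cong \B(F(\sigma_1))\times\cdots\times\B(F(\sigma_\ell))\).

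For the indecomposability of each \(\B(F(\sigma_a))\), I will argue by contradiction. Suppose \(\B(F(\sigma_a))\) is isomorphic to a product of strictly lower-dimensional Bott manifolds. Then by \Cref{lemma_decomposable_Bott_matrix}, some sequence of \(\op\) and \(\oop\) operations transforms \(F(\sigma_a)\) into a block diagonal Bott matrix. The easier case is when every intermediate matrix in this sequence is of BS type: by \Cref{prop_op1_on_BS} the operation \(\op\) is then effectively trivial, and by \Cref{prop_op2_on_BS} each \(\oop\) is an admissible transposition on the assembly consisting of the single ordered partition \(\sigma_a\). The resulting block diagonal matrix would then correspond to an assembly \(\approx(\sigma_a)\) that splits into two independent pieces, which is impossible because the neighbor graph of a single ordered partition is connected (consecutive blocks and elements inside a single block are all mutual neighbors) and admissible transpositions preserve this connectedness up to relabeling.

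The main obstacle is the remaining case, in which the sequence of operations passes through non-BS-type intermediate matrices. To handle this I would seek an invariant of the toric variety that detects the single-ordered-partition structure without reference to BS type. A natural candidate is the presentation of the cohomology ring \(H^*(\B(F(\sigma_a)))\), whose generators and relations are directly controlled by the neighbor graph of \(\sigma_a\); a tensor product decomposition \(H^*(X)\otimes H^*(Y)\) would force the quadratic relations to split into two disjoint variable sets, which the connected neighbor graph forbids. A backup route is to invoke Krull--Schmidt-style uniqueness of decomposition of Bott manifolds into indecomposable factors, applied to the product decomposition produced in the first part, thereby forcing each \(\B(F(\sigma_a))\) to be indecomposable. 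I expect the cohomological invariant to be cleanest, but either approach reduces the non-BS-type case to the combinatorics of \(\sigma_a\).
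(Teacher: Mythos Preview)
Your product-decomposition argument is essentially the paper's: both observe that indices lying in different \(U(\sigma_a)\)'s are non-neighbors (so the corresponding off-diagonal entries of \(B_{\bm i}\) vanish), block-diagonalize via a sequence of admissible \(\oop\)'s, and then read off the product from the block structure. The paper is terser (it does not spell out the bubble sort) but the content is the same.

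For indecomposability, the paper's proof is exactly what you call the ``easy case.'' It notes that in a single ordered partition every index has a neighbor (implicitly: the neighbor graph is connected), deduces that no sequence of admissible transpositions can bring \(F(\sigma_a)\) to block-diagonal form, and then simply invokes \Cref{prop_op1_on_BS}, \Cref{prop_op2_on_BS}, and \Cref{lemma_decomposable_Bott_matrix} to conclude. The paper does \emph{not} separately treat sequences of \(\op\)/\(\oop\) that pass through non-BS-type matrices; it tacitly takes the two propositions as reducing the general statement of \Cref{lemma_decomposable_Bott_matrix} to the admissible-transposition case. So the subtlety you flag as the ``main obstacle'' is one the paper glosses over rather than resolves, and the same remark applies verbatim to the paper's proof of \Cref{thm_isom}. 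Your instinct to look for an isomorphism-invariant witness (the cohomology ring, or a Krull--Schmidt statement) is a reasonable way to close that gap, but as written these are only sketches: a tensor decomposition of \(H^*\) need not be in the given generators, and uniqueness of indecomposable factorization for Bott manifolds would itself need justification. Neither is required to match the paper's argument.
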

\begin{proof}
  Recall that for \(1 \leq k < j \leq m\), the \((j,k)\)-entry of
  \(B_{\bm i}\) is zero if and only if \(j\) and \(k\) are not
  neighbors in \( \alpha(\bm i) \). Accordingly, if
  \(j \in \sigma_{a_1}\) and \(k \in \sigma_{a_2}\) with
  \(a_1 \neq a_2\), then the \((j,k)\)-entry of \(B_{\bm i}\) is
  \(0\). Therefore, there is a sequence of operations \op~and
  \oop~whose application to \(F(\alpha(\bm i))\) yields a block
  diagonal matrix whose diagonal blocks are
  \(F(\sigma_1),\dots,F(\sigma_{\ell})\). Then by
  \Cref{lemma_decomposable_Bott_matrix}, the Bott manifold
  \(\B_{\bm i}\) is the product of
  \(\B(F(\sigma_1)), \dots,\B(F(\sigma_{\ell}))\).
	
  Now we prove that each \(\B(F(\sigma_{a}))\) is indecomposable.
  Suppose that the underlying partition of \( \sigma_a \) is a
  partition of \( \{b+1,\dots,b+r\} \). Then
  \( (\sigma_a) = (\sigma_a-b) \in \AOP(n,r) \). Since
  \( (\sigma_a) \) contains only one ordered partition \( \sigma_a \),
  each \(j \in [r]\) has a neighbor \( k \) in \( (\sigma_a) \). This
  shows that there is no sequence of admissible transpositions that
  turns the Bott matrix \(F(\sigma_1)\) into a block diagonal matrix
  whose diagonal blocks are Bott matrices of smaller size. Thus, by
  \Cref{prop_op1_on_BS}, \Cref{prop_op2_on_BS}, and
  \Cref{lemma_decomposable_Bott_matrix}, we obtain that
  \(F(\sigma_a)\) is indecomposable.
\end{proof}

\begin{definition}\label{def:3}
Let \(\bm i = (i_1,\dots,i_m) \in [n]^m\). For \( j\in [m-1] \), we
define \( s_j \cdot \bm i \) to be the sequence obtained from
\( \bm i \) by exchanging \( i_j \) and \( i_{j+1} \). If
\(|i_j - i_{j+1}| > 1\), the operation that sends \( \bm i \) to
\( s_j \cdot \bm i \) is called a \emph{2-move}.
\end{definition}

Recall that \(j\) and \(j+1\) are not neighbors in \(\alpha(\bm i)\)
if and only if \(|i_j - i_{j+1}| > 1\). Thus, we obtain the following
lemma.

\begin{lemma}\label{lemma_admissible_words}
  Let \(\bm i = (i_1,\dots,i_m) \in [n]^m\) and \( j\in [m-1] \). Then
  \(s_j\) is an admissible transposition of \( \alpha(\bm i)\) if and
  only if \(|i_j - i_{j+1}| > 1\). Moreover, if \(s_j\) is an
  admissible transposition of \( \alpha(\bm i)\), then
  \( s_j \cdot \alpha(\bm i) = \alpha(s_j \cdot \bm i) \).
\end{lemma}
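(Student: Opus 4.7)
The plan is as follows. The first assertion follows immediately from the observation stated just before the lemma, which in turn is a small case analysis on Definition~\ref{def:alpha}. So my proof would begin by briefly justifying that observation, and then the main work goes into the ``moreover'' statement.

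For the first assertion, I would split into three cases according to $|i_j - i_{j+1}|$ and track where the positions $j$ and $j+1$ land in $\alpha(\bm i)$. If $i_j = i_{j+1}$, they share a block of some $\sigma_a$ and hence are neighbors. If $|i_j - i_{j+1}| = 1$, then $i_j$ and $i_{j+1}$ are consecutive integers among the distinct values $a_1 < \cdots < a_t$, so the greedy bundling of consecutive values in Definition~\ref{def:alpha} places them in adjacent blocks of the same $\sigma_a$; they are again neighbors. If $|i_j - i_{j+1}| > 1$, the blocks containing $j$ and $j+1$ can neither be equal nor adjacent within any single $\sigma_a$, so they are not neighbors. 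Admissibility of $s_j$ then follows from Definition~\ref{def_admissible}.

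For the ``moreover'' part, the key point is that $\alpha(\bm i)$ depends on $\bm i$ only through (i) the multiset of its entries, which determines the distinct values $a_1 < \cdots < a_t$, the number $\ell$, and the sub-block counts $r_a$ of the ordered partitions, and (ii) the preimages $A_k = \{p \in [m] : i_p = a_k\}$. Swapping the $j$th and $(j+1)$st entries of $\bm i$ leaves (i) unchanged. Under the hypothesis $|i_j - i_{j+1}| > 1$ we have $i_j \ne i_{j+1}$, so $j$ and $j+1$ lie in two distinct blocks $A_k, A_{k'}$ of $\alpha(\bm i)$, and the swap merely interchanges them between those two blocks. This is exactly the action of $s_j$ on $\alpha(\bm i)$, yielding $\alpha(s_j \cdot \bm i) = s_j \cdot \alpha(\bm i)$.

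No step presents a serious obstacle; both parts reduce to unwinding Definition~\ref{def:alpha}. The only subtlety worth isolating is the case $|i_j - i_{j+1}| = 1$ in the first assertion: one must argue that the two consecutive values $i_j$ and $i_{j+1}$ always end up in a \emph{common} $\sigma_a$, rather than being split across different ordered partitions. This is immediate from the greedy rule in Definition~\ref{def:alpha}, which gathers every maximal run of consecutive values into a single ordered partition.
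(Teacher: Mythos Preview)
Your proposal is correct and follows the same approach as the paper, which in fact gives no explicit proof: the paper simply invokes the observation preceding the lemma (equivalently, item~(3) in the proof of Proposition~\ref{prop_sur}) for the first assertion and leaves the ``moreover'' part as an immediate consequence of Definition~\ref{def:alpha}. Your write-up is a faithful, more detailed unpacking of exactly those definitions.
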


For \( \bm i = (i_1,\dots,i_m) \in [n]^m \), the
\emph{standardization} \( \st(\bm i) \) of \( \bm i \) is the sequence
defined by
\[
  \st(\bm i) = (i_1-k+1,\dots,i_m-k+1),
\]
where \( k \) is the smallest integer in \( \bm i \). We also define
\( \iota_n(\bm i) \) to be the sequence obtained from \( \bm i \) by
replacing every integer \( i\in [n] \) with \( n+1-i \). By the
definition of the map \( \alpha \), we have
\( \alpha(\st(\bm i)) = \alpha(\bm i) \) and
\( \alpha(\iota_n(\bm i)) \sim \alpha(\bm i) \).

\begin{theorem}\label{cor_indecomposable} 
  Let \(\bm i, \bm i' \in [n]^m\). Suppose that the Bott manifolds
  \(\B_{\bm i}\) and \(\B_{\bm i'}\) are indecomposable. Then
  \(\B_{\bm i}\) and \(\B_{\bm i'}\) are isomorphic \textup{(}as toric
  varieties\textup{)} if and only if either \( \st(\bm i') \) or
  \( \st(\iota_n(\bm i')) \) is obtained from \( \st(\bm i) \) via a
  sequence of \(2\)-moves.
\end{theorem}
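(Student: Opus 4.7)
The plan is to combine Theorem~\ref{thm_isom} with the indecomposability characterization from Proposition~\ref{prop_indecomposable}, and then translate the combinatorial equivalence $\approx$ on assemblies back to 2-moves on sequences via Lemma~\ref{lemma_admissible_words}.

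By Theorem~\ref{thm_isom}, $\B_{\bm i} \cong \B_{\bm i'}$ as toric varieties if and only if $\alpha(\bm i) \approx \alpha(\bm i')$. By Proposition~\ref{prop_indecomposable}, the indecomposability assumption forces $\alpha(\bm i) = (\sigma)$ and $\alpha(\bm i') = (\tau)$ to each consist of a single ordered partition; let $r$ denote the common number of blocks. The distinct values appearing in $\bm i$ are then $r$ consecutive integers, so $\st(\bm i)$ is a sequence in $[r]^m$ whose entries fill $\{1,\dots,r\}$, and similarly for $\st(\bm i')$.

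Two observations drive the proof. First, restricted to sequences in $[r]^m$ whose entries fill $\{1,\dots,r\}$, the map $\alpha$ is a bijection onto single ordered partitions of $[m]$ with $r$ blocks, with inverse sending $(A_1,\dots,A_r)$ to the sequence $\bm j$ defined by $j_l = k$ whenever $l \in A_k$. Second, a direct calculation from the definitions gives
\[
\st(\iota_n(\bm i)) = \iota_r(\st(\bm i)) \quad\text{and}\quad \alpha(\iota_n(\bm i)) = (\overline{\sigma}),
\]
and analogously for $\bm i'$. Hence $\st(\bm i)$, $\st(\iota_n(\bm i))$, $\st(\bm i')$, $\st(\iota_n(\bm i'))$ are the unique representatives under this bijection whose $\alpha$-images are $(\sigma)$, $(\overline{\sigma})$, $(\tau)$, $(\overline{\tau})$, respectively.

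Unpacking $\approx$ for single ordered partitions, $(\sigma) \approx (\tau)$ means that some $\tilde{\sigma} \in \{(\sigma),(\overline{\sigma})\}$ can be joined to some $\tilde{\tau} \in \{(\tau),(\overline{\tau})\}$ by a sequence of admissible transpositions; since admissible transpositions preserve the number of ordered partitions, every intermediate assembly is also a single ordered partition. By Lemma~\ref{lemma_admissible_words} together with the uniqueness above, such a sequence lifts to a path of 2-moves connecting the $\st$-representatives of $\tilde{\sigma}$ and $\tilde{\tau}$, yielding four cases indexed by $(\tilde{\sigma},\tilde{\tau})$. These collapse to the two in the statement because $\iota_r$ preserves the condition $|k_j - k_{j+1}|>1$, so applying $\iota_r$ entrywise to a 2-move path produces another 2-move path: a path from $\iota_r(\st(\bm i))$ to $\st(\bm i')$ is equivalent to one from $\st(\bm i)$ to $\iota_r(\st(\bm i'))=\st(\iota_n(\bm i'))$, and a path from $\iota_r(\st(\bm i))$ to $\iota_r(\st(\bm i'))$ is equivalent to one from $\st(\bm i)$ to $\st(\bm i')$. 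The argument is reversible, so both implications follow; the main technical step is verifying the two observations above, after which the rest is bookkeeping.
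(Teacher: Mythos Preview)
Your proof is correct and follows essentially the same route as the paper: both use Theorem~\ref{thm_isom} and Proposition~\ref{prop_indecomposable} to reduce to the case of single ordered partitions, lift admissible transpositions to \(2\)-moves via Lemma~\ref{lemma_admissible_words}, and then collapse the four \((\tilde\sigma,\tilde\tau)\) cases to two via the block-reversal symmetry. The only cosmetic difference is that you phrase the collapse through the involution \(\iota_r\) on standardized sequences, whereas the paper phrases it as ``replace every \(\sigma^{(i)}\) by \(\overline{\sigma^{(i)}}\)''; these are the same operation viewed on the sequence side versus the assembly side, and your explicit bijection observation is exactly the injectivity the paper invokes when concluding \(\bm i^{(k)} = \st(\bm i')\) or \(\st(\iota_n(\bm i'))\).
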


\begin{proof}
  The ``if'' statement follows from \Cref{thm_isom},
  \Cref{lemma_admissible_words}, and the fact that
  \( \alpha(\st(\bm i)) = \alpha(\bm i) \) and
  \( \alpha(\iota_n(\bm i)) \sim \alpha(\bm i) \).

  For the ``only if'' statement, suppose that \(\B_{\bm i}\) and
  \(\B_{\bm i'}\) are isomorphic. By \Cref{prop_indecomposable}, each
  of \(\alpha(\bm i)\) and \(\alpha(\bm i')\) consists of one ordered
  partition, i.e., we can write
  \(\alpha(\bm i) = \alpha(\st(\bm i)) = (\sigma)\) and
  \( \alpha(\bm i') = \alpha(\st(\bm i')) = (\sigma') \). By
  Theorem~\ref{thm_isom}, there exist
  \( \rho^{(0)},\rho^{(1)},\dots,\rho^{(k)}\in \AOP(n,m) \) such that
  \( \rho^{(0)}\sim (\sigma) \), \( \rho^{(k)} \sim (\sigma') \), and
  for each \( i\in [k] \), there is an admissible transposition
  \( s_{j_i} \) of \( \rho^{(i-1)} \) such that
  \( s_{j_i}\cdot \rho^{(i-1)} = \rho^{(i)} \). Since \( (\sigma) \)
  has only one ordered partition, \( \rho^{(0)}\sim (\sigma) \)
  implies that \( \rho^{(0)} = (\sigma^{(0)}) \) for some ordered
  partition \( \sigma^{(0)} \). Similarly, for each \( i\in [k] \), we
  have \( \rho^{(i)} = (\sigma^{(i)}) \) for some ordered partition
  \( \sigma^{(i)} \). Since
  \( \rho^{(0)} = (\sigma^{(0)}) \sim (\sigma) \), we have
  \( \sigma^{(0)} = \sigma \) or
  \( \sigma^{(0)} = \overline{\sigma} \). By replacing every
  \( \sigma^{(i)} \) by \( \overline{\sigma^{(i)}} \) if necessary, we
  may assume that \( \rho^{(0)} = (\sigma) = \alpha(\st(\bm i)) \). On
  the other hand, \( \rho^{(k)} \) is either \( \alpha(\st(\bm i')) \)
  or \( \alpha(\st(\iota_n(\bm i'))) \).

  Since \( s_{j_i} \) is an admissible transposition of
  \( \rho^{(i-1)} \) such that
  \( s_{j_i}\cdot \rho^{(i-1)} = \rho^{(i)} \), by
  \Cref{lemma_admissible_words}, we have
  \( \rho^{(i)} = \alpha(\bm i^{(i)}) \), where
  \( \bm i^{(0)} = \st(\bm i) \) and
  \( \bm i^{(i+1)} = s_{j_i}\cdot \bm i^{(i)} \). Thus,
  \( \rho^{(k)} = \alpha(\bm i^{(k)}) \), which is equal to either
  \( \alpha(\st(\bm i')) \) or \( \alpha(\st(\iota_n(\bm i'))) \).
  Since each of \( \bm i^{(k)} \), \( \st(\bm i') \), and
  \( \st(\iota_n(\bm i')) \) has the smallest integer \( 1 \), and
  their images under \( \alpha \) have only one ordered partition, we
  obtain that \( \bm i^{(k)} \) is equal to either \( \st(\bm i') \)
  or \( \st(\iota_n(\bm i')) \). This means that \( \st(\bm i') \) or
  \( \st(\iota_n(\bm i')) \) is obtained from \( \st(\bm i) \) via a
  sequence of \(2\)-moves.
\end{proof}

Recall from~\cite{Karu13Schubert} that a Schubert variety \(X_w\) is
toric if and only if any reduced decomposition
\(\bm i = (i_1,\dots,i_m) \) of \(w\) consists of distinct integers.
Moreover, in this case, a toric Schubert variety \(X_w\) is isomorphic
to the Bott manifold \(\B_{\bm i}\) of BS type corresponding to
\(\bm i\) (see Remark~\ref{rmk_toric_Schubert}). Applying
Theorem~\ref{cor_indecomposable} to toric Schubert varieties in type
\(A_m\) of dimension \(m\) gives another proof of the result
in~\cite[Proposition~4.1]{LMP_directed_Dynkin} for the case of type \(A\).

\begin{corollary}\label{cor_compare_with_directed_Dynkin}
    Let \(X_w\) and \(X_{w'}\) be toric Schubert varieties in type \(A_m\) of dimension \(m\). Then \(X_w\) and \(X_{w'}\) are isomorphic \textup{(}as toric varieties\textup{)} if and only if either \(w'=w\) or \(w'=w_0ww_0\). 
    Here, \(w_0\) is the longest element in the Weyl group.
\end{corollary}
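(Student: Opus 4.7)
The plan is to reduce the corollary to \Cref{cor_indecomposable} by expressing the isomorphism type of each toric Schubert variety in terms of a reduced decomposition. Under the hypotheses, $w, w' \in \mathfrak{S}_{m+1}$ both have length $m$, and by the characterization of toric Schubert varieties recalled just before the corollary, every reduced decomposition of $w$ (and of $w'$) consists of distinct integers in $[m]$, hence is a permutation of $[m]$. Fixing reduced words $\bm i$ of $w$ and $\bm i'$ of $w'$, \Cref{rmk_toric_Schubert} gives $X_w \cong \B_{\bm i}$ and $X_{w'} \cong \B_{\bm i'}$. Since $\bm i$ uses the consecutive integers $1,\dots,m$, the assembly $\alpha(\bm i)$ consists of a single ordered partition, so \Cref{prop_indecomposable} implies that $\B_{\bm i}$ is indecomposable, and the same holds for $\B_{\bm i'}$.

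With indecomposability in hand, I would invoke \Cref{cor_indecomposable} with $n = m$. Each of $\bm i$ and $\bm i'$ contains the integer $1$, so standardization fixes them, and since $\bm i'$ contains $m$, the sequence $\iota_m(\bm i')$ also contains $1$ and is fixed by $\st$. Consequently, $X_w \cong X_{w'}$ is equivalent to the combinatorial condition that either $\bm i'$ or $\iota_m(\bm i')$ be obtainable from $\bm i$ via a sequence of $2$-moves. I would then translate this back to the Weyl group: a $2$-move swaps $i_j$ and $i_{j+1}$ when $|i_j - i_{j+1}| > 1$, which is precisely the Coxeter commutation $s_{i_j} s_{i_{j+1}} = s_{i_{j+1}} s_{i_j}$, and therefore preserves the underlying Weyl group element. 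Combined with the identity $w_0 s_i w_0 = s_{m+1-i}$ in $\mathfrak{S}_{m+1}$, which makes $\iota_m(\bm i') = (m+1-i'_1, \dots, m+1-i'_m)$ a reduced word for $w_0 w' w_0$, the two alternatives produce $w' = w$ and $w' = w_0 w w_0$, respectively.

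The main obstacle is the converse translation: if $w = w'$ (resp. $w = w_0 w' w_0$), then $\bm i'$ (resp. $\iota_m(\bm i')$) must be obtainable from $\bm i$ by a sequence of $2$-moves. This requires showing that among reduced words of a common element which consist of distinct letters, $2$-moves alone suffice to connect any two of them. I would argue this via Matsumoto's theorem: any two reduced words are related by commutation and braid moves, but a braid move requires the pattern $s_i s_{i+1} s_i$ as a consecutive subword, which cannot appear in a word with distinct letters. Since commutation moves preserve the multiset of letters, starting from such a word one can never reach a configuration admitting a braid move, so only $2$-moves ever occur. This completes the reduction and yields the dichotomy stated in the corollary.
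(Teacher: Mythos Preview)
Your proof is correct and follows essentially the same route as the paper: establish that the reduced words are permutations of $[m]$, deduce indecomposability, and then invoke \Cref{cor_indecomposable} together with the facts that $2$-moves preserve the underlying Weyl group element and that $\iota_m$ corresponds to conjugation by $w_0$.

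The one place you do more work than the paper is the converse (``if'') direction. You fix arbitrary reduced words $\bm i,\bm i'$ at the outset and then need Matsumoto's theorem to argue that any two reduced words with distinct letters are connected by commutations alone. That argument is valid, but the paper sidesteps it entirely: since $X_w$ depends only on $w$ and not on the chosen reduced word, the case $w'=w$ is immediate, and for $w'=w_0ww_0$ one may simply take $\bm i' = \iota_m(\bm i)$, after which the $2$-move condition of \Cref{cor_indecomposable} is satisfied trivially (indeed $B_{\iota_m(\bm i)} = B_{\bm i}$ straight from \Cref{def_Bott_manifold_of_BS_type}). Your Matsumoto argument buys independence from the choice of reduced word at the cost of an extra ingredient; the paper's approach trades that ingredient for the freedom to pick the word.
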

\begin{proof}
  We first note that if a toric Schubert variety \(X_w\) in type
  \(A_m\) has dimension \(m\), then for any reduced decomposition
  \(\bm i = (i_1,\dots,i_m) \) of \(w\), we have
  \(\{i_1,\dots,i_m\} = [m]\). Hence, \(X_w\) is indecomposable.
  Moreover, \(w_0ww_0\) can be obtained from \(w\) by replacing every
  simple reflection \(s_i\) in a reduced decomposition with
  \(s_{m+1-i}\). Since a sequence of \(2\)-moves on a sequence does not
  change the corresponding permutation, we obtain the result by
  Theorem~\ref{cor_indecomposable}.
\end{proof}


\end{document}